\documentclass{amsart}
\usepackage{fancyhdr}
\usepackage{amsmath,amssymb,amsfonts}
\usepackage{graphicx}
\usepackage{epstopdf}
\usepackage{array}
\usepackage{subcaption}
\usepackage{colonequals}
\usepackage{bbm}
\usepackage{multirow}
\usepackage{booktabs}
\usepackage{mathrsfs}
\usepackage{enumitem}
\usepackage[capitalize]{cleveref}
\usepackage{siunitx}
\usepackage{url}

\usepackage[numbers]{natbib}

\def\X{\CMcal{\X}}
\def\F{\CMcal{F}}
\def\X{\CMcal{X}}

\def\F{\mathcal{F}}

\renewcommand{\phi}{\varphi}
\renewcommand{\epsilon}{\varepsilon}
\renewcommand{\star}{\cdot}
\newcommand{\cpp}{\mathrm{cpp}}
\newcommand{\nototimes}{{\,\bar{\otimes}\,}}
\newcommand{\leb}{{\boldsymbol \lambda}}

\newtheorem{problem}{Variational Problem}
\newtheorem{my_algorithm}{Algorithm}

\newtheorem{definition}{Definition}
\newtheorem{proposition}{Proposition}
\newtheorem*{proposition*}{Proposition}
\newtheorem{appendix-proposition}{Appendix Proposition}

\newcommand{\bmhead}{\section*}

\theoremstyle{remark}
\newtheorem{remark}{Remark}

\graphicspath{{.},{./Figures/}}

\begin{document}
\bibliographystyle{./plainnat_modified}

\title [Image Varifolds on Meshes]{Image Varifolds on Meshes for Mapping Spatial Transcriptomics}
    
\author{Michael I. Miller}
\address{MM: Center of Imaging Science and 
    Department of Biomedical Engineering, Johns Hopkins
    University}
\email{mim@jhu.edu}

\author{Alain Trouv\'e}
\address{AT: Centre Giovanni Borelli (UMR 9010),Ecole Normale Supérieure Paris-Saclay, Université Paris-Saclay}
\email{alain.trouve@cmla.ens-cachan.fr}         

\author{Laurent Younes}
\address{LY: Center of imaging Science and
    Department of Applied Mathematics and Statistics, Johns Hopkins
    University}
\email{laurent.Younes@jhu.edu}

    \date{\today}
\begin{abstract}
 Advances in the development of largely automated microscopy methods such as MERFISH for imaging cellular structures in mouse brains  are providing spatial detection of micron resolution gene expression.
While there has been tremendous progress made in the field Computational Anatomy (CA) 
to perform diffeomorphic mapping technologies at the tissue scales
 for advanced neuroinformatic studies in common coordinates,
integration of molecular- and cellular-scale populations through statistical averaging via common coordinates remains yet unattained. 
This paper describes the first set of algorithms for calculating geodesics in the space of diffeomorphisms, what we term
Image-Varifold LDDMM,
extending the family of large deformation diffeomorphic metric mapping (LDDMM) algorithms
to accommodate
the ``copy and paste'' varifold action of particles 
which extends consistently to the tissue scales.

We represent the brain data as geometric measures, termed as {\em image varifolds}
supported by a large number of unstructured points, 
each point representing a small volume in space 
and carrying a list of densities of {\em features} elements of a high-dimensional feature space.
The shape of image varifold brain spaces is measured by transforming them by diffeomorphisms.
The metric between image varifolds is obtained after embedding these objects in a linear space equipped with the norm, yielding a so-called ``chordal metric.'' 
\end{abstract}


\maketitle

\section{Introduction}

We are seeing a new period of method-driven renaissance in neuroanatomy, one that is distinguished by a focus on large-scale projects generating unprecedented amounts of spatially resolved brain data across multiple, complementary modalities.  Recent years have seen many advances in the development of largely automated microscopy instruments for imaging cellular structures in mouse brain anatomy and function \cite{RN1,RN2,RN3,RN7}, including 
morphological reconstructions at the dense 3D electron microscopy (EM) \cite{RN22} and at the mesoscale by whole-brain reconstructions (as exemplified by the BRAIN Cell Census Network (BICCN) project \cite{RN9}), neuronal projectomes \cite{RN23,RN24} and brain-wide maps of cell type distributions \cite{RN8}, spatial transcriptomics technologies, such as MERFISH \cite{RN25,RN26,RN27}, STARmap \cite{RN10,RN11} generating massive amounts of gene expression data of thousands of genes at a time, as well as emergent barcoding technologies linking data on neuronal projectomes with dense transcriptional profiling at the single-cell level \cite{RN28,RN29,RN30}.

 Since the publication of the Allen ISH atlas in 2006, methods for the spatial detection of gene expression have rapidly improved in their both spatial resolution and the number of genes that can be detected simultaneously. A host of different methods, including MERFISH, STARmap, seqFISH \cite{RN31}, and others now allows for the simultaneous measurement of a few hundreds to thousands of genes, and potentially all genes, at single molecule and hence single-cell resolution, and at the scale of whole brain sections. Spatially resolved transcriptomic data further offers an opportunity for obtaining multi-modal measurements at single-cell resolution allowing for the combination of dense spatial transcriptomics with the simultaneous measurement of single-neuron projection information using barcode sequencing in BARseq2 \cite{RN28,RN29}. Spatial transcriptomics data can also be registered to functional Ca2+ imaging data, linking gene expression and neuronal activity in behaving mice \cite{RN33,RN34,RN35}. 
The importance of these technological advancements for understanding the dense metric structure of the brain by building up coarse physiological atlas scales built up from dense imaging measurements at the cellular scales was recently recognized when spatial transcriptomics was selected as Nature method of the year \cite{RN36}.

While disparate datasets are being collected from comparable brains and thus exist in a common underlying coordinate system, differences in data modalities and imaging technologies however map them into disparate spaces that need to be mapped to each other to allow integration and maximal impact of these datasets obtained under high expense. This is one of the principal goals of this paper to provide image varifold (IV) LDMMM building on the progress made in the Computational Anatomy (CA) 
\cite{GrenanderMiller1998,Ashburner2009,Pennec2011} community in the suite of methods called large deformation diffeomorphic metric mapping (LDDMM).
Advances in CA for diffeomorphic mapping technologies to atlas coordinates at the tissue scales
\cite{RN15,RN16,RN7,RN18} provide mapping technologies for advanced neuroinformatic studies in common coordinates.
Integration of molecular scale populations through statistical averaging via common coordinates remains yet unattained at the molecular and cellular scales. 
Only recently have theories been put forward that extend the diffeomorphism atlasing technologies of CA \cite{RN20,RN21,GrenanderMiller1998,Ashburner2009,Pennec2011,RN15,RN16,RN7,RN18} to the molecular scales consistently with  
diffeomorphic mapping at the tissue scales, providing mapping technologies for advanced neuroinformatic studies in common coordinates,
integration of molecular- and cellular-scale populations through statistical averaging via common coordinates.
These theories, 
introduced in \citet{RN21,miller-tward-trouve-2022},
describe the geodesic equations for building correspondences between MERFISH samples using varifold norms. This paper describes the new family of algorithms for calculating these geodesics, what we term
Image-Varifold LDDMM, extending the family of large deformation diffeomorphic metric mapping (LDDMM) algorithms \cite{RN15,Miller2006,Trouve2010a,Vialard2012,Miller2015} to accommodate
the copy-and-paste varifold action of particles described in \cite{RN21} which extends consistently to the tissue scales.

We expect that the Image-Varifold LDDMM technologies will also be important for problems emerging now in digital pathology linking the molecular scales of histology with the tissue scales of MRI for understanding neurodegenerative diseases associated to the validation and further development of biomarkers as surrogates of molecular disease, as in Alzheimer’s Disease \cite{RN18}. 

The major contribution of this paper is to adapt advanced computational methods developed in the field of Computational Anatomy for neuroanatomical data analysis by a broad neuroscience research community using spatial transcriptomics datasets. We believe IV-LDDMM can potentially be broadly used by the neuroscience community since we will describe algorithms for registering datasets collected from multiple animals as 3D stacks or as single 2D sections as well as build correspondence to the Common Coordinate Framework (CCF) template \cite{RN40} that enables standardized comparisons across different datasets and users. Further, the user will be able to overlay and segment the registered data with labels from atlases transporting the labels from the coarse tissue scale to the fine molecular scale using the diffeomorphic properties of the maps, including the Allen ARA 2008 atlas \cite{RN41}, CCFv3 2017 atlas \cite{RN42} and the recently built unified Paxinos and CCFv3 atlas \cite{RN43}. This will allow users will to be able to perform statistical analysis on multiple sections within and across animals to extract a number of useful statistics of their data leveraging the anatomical labels and  and the user will be able to share datasets within registered coordinate systems with other groups collaborating on data analysis with different laboratories.

\section{Image varifolds}

\subsection{Definitions}
Let $\mathcal F$ denote a ``feature'' space, which correspond to typically high-dimensional measurements made by the imaging system, describing biological function. We are interested in the combined analysis of space and function, and will therefore work with the product space $\mathbb R^d \times \mathcal F$ ($d=2$ or 3).

At macroscopic scale, an image is usually defined, using a continuum approximation, as a function $q: \mathbb R^d \to \mathcal F$. However, in biological imaging, the image values, discretized over pixels, result from the accumulated contributions (counts) of various chemical components collected in the imaged volume and are discrete in nature. Mathematically, ``counting'' is represented using Dirac measures. The elementary Dirac measure $\delta_x \otimes \delta_f$ (for $x\in \mathbb R^d$ and $f\in \mathcal F$), when evaluated at a set $V \times A$ in $\mathbb R^d \times \mathcal F$, returns 1 if $x\in V$ and $f\in A$ and zero otherwise. It can be interpreted as an indication that a ``basic element'' (for example, a protein, or a cell) is observed at location $x$ with feature $f$ (which can be, for example, a protein species or a cell type).
The Diracs can be added, taking 
\[
\mu = \sum_{k=1}^n \delta_{x_k} \otimes \delta_{f_k},
\]
so that $\mu(V\times A)$ counts the number of pairs $(x_k, f_k)$ in the $V\times A$ and can be viewed as a microscopic description  
of the data. 

If $V$ is a small volume, 
the quantity
\[
\rho(V) = \frac1{\vert V\vert } \sum_{k=1}^n \delta_{x_k}(V)
\]
where $\vert V\vert $ is the volume of $V$, measures the density of these elements. The quantity 
\[
\zeta_V = \frac{\sum_{k: x_k\in V}\delta_{f_k}}{\sum_{k=1}^n \delta_{x_k}(V)}
\]
then provides a probability measure on $\mathcal F$ that describes the feature profile of $V$. The continuum approximation corresponding to macroscopic scales is obtained by fixing $x\in \mathbb R^d$ and letting $V_x$ be an infinitesimal neighborhood of $x$ with volume $dx$ and  approximating $\zeta_{V_x}$ with $\zeta_x$:
\begin{equation}
\label{eq:volume.limit}
\mu(V_x \times A) \simeq \zeta_x(A) \rho(x) dx,\quad A\subset \mathcal{F}.
\end{equation}

We introduce the following notation. If $\mathfrak m$ is a measure on $\mathbb R^d$ and $\zeta$ a transition probability from $\mathbb R^d$ to $\mathcal F$ (i.e., a function $x \mapsto \zeta_x$ where $\zeta_x$ is a probability measure on $\mathcal{F}$), we define $\mathfrak m \nototimes \zeta$ as the measure such that
\begin{equation}
    \label{eq:im.var.disintegrated}
    (\mathfrak m\nototimes \zeta) (U \times A) = \int_U \int_A d\zeta_x(f) d\mathfrak m(x)
\end{equation}
for all measurable $U\subset \mathbb R^d$ and $A\subset \mathcal F$. 
The  measure $\mu$ in \cref{eq:volume.limit} is equal to $(\rho \leb) \nototimes \zeta$, where $\leb$ denotes Lebesgue's measure, with $d\leb(x)=dx$. 
We will refer to measures on $\mathbb R^d \times \mathcal F$ as {\em image varifolds}, and this concept provides a unified representation of microscopic and macroscopic scales.
Defining  ``space-feature functions'' as mappings $F: \mathbb R^d\times \F \to \mathbb R$, image varifolds are linear operators on the set of such functions, with notation
\[
(\mu\mid F) = \int_{\mathbb R^d\times \F} F(x,f) d\mu(x,f).
\]

We emphasize
that any function $q: \mathbb R^d \to \mathcal F$ can be considered as an image varifold $\mu_q$
such that 
$q \mapsto \mu_q $ provides a one-to-one representation of measurable functions and for any 
$F$, one has
\begin{equation}
(\mu_q\mid F) = \int_{\mathbb R^d} F(x, q(x)) dx
\, . \label{linear-operator-image-varifold}
\end{equation}

We use diffeomorphisms to transform image varifolds
and to define geodesics in the space of image varifolds.
Diffeomorphisms act on functions $q:\mathbb R^d \to \mathcal F$ as $\varphi \star q = q\circ \varphi^{-1}$ with \eqref{linear-operator-image-varifold} implying
\begin{align*}
(\mu_{\varphi\star q}\mid F) &= \int_{\mathbb R^d} F(x, q\circ \varphi^{-1}(x)) dx\\
&= \int_{\mathbb R^d} \vert D\varphi \vert \, F(\varphi(x), q(x)) dx \\
&= (\mu_q\,\vert \,\vert D\varphi\vert \, F(\varphi(\cdot), \cdot))
\end{align*}
where $\vert D\phi\vert $ is the absolute value of the Jacobian determinant of $\phi$.
This suggests defining the action of a diffeomorphism $\varphi$ on an image varifold as an extension of $\mu_q \to \mu_{\varphi\star q}$, simply letting, for a general varifold $\mu$:
\begin{equation}
    \label{eq:action}
(\varphi\star \mu \, \vert \, F(\cdot, \cdot)) = (\mu\,\vert \,\vert D\varphi\vert \, F(\varphi(\cdot), \cdot)).
\end{equation}

The following definition summarizes this discussion.
\begin{definition}
\label{def:image.varifold}
Let $\mathcal F$ be equipped with a $\sigma$-algebra making it a measurable space. A $d$-dimensional image varifold is a measure on the set $\mathbb R^d \times \mathcal F$.

If $\mathfrak m$ is a measure on $\mathbb R^d$ and $\zeta$ a transition probability from $\mathbb R^d$ to $\mathcal F$, the measure  $\mathfrak m\nototimes \zeta$ in \cref{eq:im.var.disintegrated} is called an image varifold in disintegrated form.

Diffeomorphisms of $\mathbb R^d$ act on image varifolds through the action defined in \cref{eq:action}.   
\end{definition}

\begin{remark}
In the decomposition of \cref{eq:im.var.disintegrated}, $\zeta_x$ only needs to be defined for $x$ in the support of $\mathfrak m$. If $\zeta$ is a fixed measure on $\mathcal{F}$, $\mathfrak m\nototimes \zeta$ is the product measure between $\mathfrak m$ and $\zeta$, in which case we will prefer the standard notation $\mathfrak m \otimes \zeta$. 

\end{remark}

The previous discussion provides examples of image varifolds in disintegrated form.
First the ``continuum image varifolds''
takes the form:
\begin{subequations}
\begin{align}
&
\mu = (\rho \, \leb) \nototimes \zeta \
\intertext{ and the image varifold $\mu_q$ for function $q:\mathbb R^d \to \mathcal F$ has $\mathfrak m$ Lebesgue's measure and $\zeta_x = \delta_{q(x)}$.
The discrete image varifold
}
&\mu = \sum_{k=1}^n \delta_{x_k} \otimes \delta_{f_k}, \ x_k \in \mathbb R^d, f_k \in \mathcal F 
\label{eq:im.var.exp.2}
\end{align}
\end{subequations}
has $\mathfrak m = \sum_{k=1}^n \delta_{x_k}$ and $\zeta_x = \delta_{q(x)}$ where $q: \mathbb R^d \to \mathcal F$ is any function such that  $q(x_k) = f_k$ for $k=1, \ldots, n$. Indeed, we can write, for any functions $F: \mathbb R^d \to \mathbb R$ and $G:\mathcal{F} \to \mathbb R$,
\begin{align*}
\big(\mathfrak m \nototimes \zeta\mid FG\big) &=  \int_{\mathbb R^d} \int_{\mathcal F} F(x) G(f) d\zeta_x(f) d\mathfrak m(x) \\
& =  \int_{\mathbb R^d} F(x) G(q(x)) d\mathfrak m(x) \\
& = \sum_{k=1}^n  F(x_k) G(f_k).
\end{align*}

\medskip

Image varifolds are the main focus of this paper, with a primary goal to develop a numerical approach allowing for their comparison. They have been introduced in \citep{RN21} as a tool for the analysis of spatially resolved transcriptomic images, in combination with a hierarchical modeling. (We will however only consider a single scale in the present paper.) Varifolds \citep{almgren1966plateau} were introduced as a mathematical representation of surfaces (or more generally of Riemannian manifolds), as measures in on the product space $\mathbb R^3 \times S^2$, where $S^2$ (replaced by a Grassmannian for general manifolds) is the unit sphere in $\mathbb R^3$, in order to facilitate the analysis of variational problems over surfaces. In that original model, the equivalent of $\mathfrak m$ in \cref{def:image.varifold} is the singular measure supported by a surface  $M\subset \mathbb R^3$ and $\zeta_x$ is, for $x\in M$, the Dirac measure at  the normal to $M$ at $x$. Surface varifolds have been introduced in \citep{charon2013varifold} for shape analysis, and used in conjunction with the LDDMM algorithm to develop surface matching methods. 

\begin{remark}
We point out that an alternate action of diffeomorphisms on varifolds can be defined in which the Jacobian determinant is dropped from the right-hand side of \cref{eq:action}. The resulting action (denoted $\varphi_\sharp \mu$) is the push-forward of the measure $\mu$ by $\varphi$. The resulting action on images (here interpreted as densities) is $\varphi_\sharp q = \vert D\varphi^{-1}\vert \, q\circ \varphi^{-1}$. This latter action is the one used in shape analysis to compare curves or surfaces \citep{charon2013varifold}. In our setting, where we need to compare tissues with similar compositions but different sizes, this push-forward action is not appropriate, since, say, expansion results in $\vert D\varphi^{-1}\vert  < 1$ and a reduction of the original density (i.e., a sparsification of cells in tissue), which is undesirable. The  action we choose throughout for image varifolds leaves the magnitude of $q$ unchanged, essentially creating more volume without changing the composition of the tissue using a ``copy and paste'' operation. 
\end{remark}

\subsection{A semi-discrete representation of varifolds}
\Cref{eq:im.var.exp.2} describes a varifold in full discrete form, which is well adapted for numerical computations. In the following, however, it will be convenient to have more flexibility on the image transition probabilities,  allowing them to be non discrete. We still discretize  the spatial domain using Dirac measures, but, in preparation for our mesh model in the next section, we attach these  measures to small subsets of $\mathbb R^d$ and provide them with weights that depend on the volume of these subsets. This results in ``semi-discrete varifolds,''  used throughout,  defined by
\begin{enumerate}[label = (\roman*)]
    \item A finite family, $\Gamma$, of subsets of $\mathbb R^d$
    with a list of ``centers,'' $m_\gamma \in \gamma$, $\gamma\in \Gamma$, with volumes $\vert \gamma\vert $;
    \item A list of weights, $\alpha_\gamma \geq 0$,  $\gamma\in \Gamma$;
    \item A list of probability measures on $\mathcal F$, $\zeta_\gamma$, $\gamma\in \Gamma$;
\end{enumerate}
Our space of image-varifold 
$(\Gamma, m, \alpha, \zeta)$ 
with
action via diffeomorphisms becomes
\begin{subequations}
\begin{align}
 &   
 \label{eq:disc.var}
\mu = \sum_{\gamma\in \Gamma} \alpha_\gamma\,\vert \gamma\vert \, \delta_{m_\gamma} \otimes \zeta_\gamma.
\\
&
\label{eq:action.discrete}    
\varphi\star \mu = \sum_{\gamma\in \Gamma} \alpha_\gamma\, \vert D\varphi(m_\gamma)\vert \,\vert \gamma\vert \, \delta_{\varphi(m_\gamma)} \otimes \zeta_\gamma.
\end{align}
\end{subequations}
We call these varifolds ``semi-discrete'' since we use Dirac measures for the spatial component but not necessarily for the image.
For a space-feature functions they act linearly on functions on $\mathbb R^d\times \mathcal F$:
\[
(\mu \, \vert \, F) = \sum_{\gamma\in \Gamma} \alpha_\gamma\, \vert \gamma\vert \, \int_{\mathcal F} F(m_\gamma, f) d\zeta_\gamma(f).
\]

\subsection{Mesh-based varifolds used for computation}
\label{sec:meshes}
We now specialize further to the situation in which the sets in $\Gamma$ are associated with meshes in $\mathbb R^d$, using simplicial meshes (i.e., tetrahedra in 3D and triangles in 2D). Letting $d$ denote the dimension, (two or three), we define a simplicial family as a collection $\boldsymbol x = (x_i, i\in I)$ of distinct points in $\mathbb{R}^d$ together with a family of $(d+1)$-tuples,  $C = (c = (c_{0}, \ldots , c_{d}) \in I^{d+1})$ of indexes such that the simplices
\[
\gamma_c(\boldsymbol x) = \left\{ \sum_{i=0}^{d} a_i x_{c_i}, a_i\geq 0 , a_0+\dots+a_{d}=1\right\}
\]
have non-empty interior with positive orientation, i.e., their volume is
\begin{equation}
    \label{eq:volume}
\vert \gamma_c(\boldsymbol x)\vert  := \mathrm{det}(x_{c_1} - x_{c_0}, \ldots, x_{c_{d}} - x_{c_0})/d!,
\end{equation}
requiring that the term in the right-hand side is positive.
The simplex centers $m_c$ are
\[
m_c(\boldsymbol x) = \frac1{d+1} (x_{c_0} + \cdots + x_{c_{d}}).
\]

This family forms a simplicial mesh of some subset $D$ of $\mathbb{R}^d$ if the simplices only intersect at faces, edges or vertexes and their union is equal to $D$, but we will not need to enforce this  constraint in this paper.
We let $S = (I, C)$, the collection of indexes and $(d+1)$-tuples, which represents the  structure of the family. We denote the family itself (with a valid instantiation of vertexes) as $(I, C, \boldsymbol x) = (S, \boldsymbol x)$.

Let $\mathcal F$ denote a feature space, as above. 
\begin{definition}
\label{def:tetrahedral}
A simplicial image varifold structure is given by a simplicial family $(S,\boldsymbol x)$ with $S=(I,C)$, a family non-negative numbers $(\alpha_c, c\in C)$ and a family of probability  measures on $\mathcal F$,  $\boldsymbol \zeta = (\zeta_c, c\in C)$, with everything summarized as $\mathcal T = (S,\boldsymbol x, \boldsymbol{\alpha}, \boldsymbol \zeta)$.
The associated image varifold and the result of its transformation by a diffeomorphism $\phi$ are
\begin{subequations}
\begin{align}
\label{eq:mu.T}
\mu_{\mathcal T} &= \sum_{c\in C} \alpha_c\, \vert \gamma _c({\boldsymbol x})\vert \, \delta_{m_c({\boldsymbol x})} \otimes \zeta_c
\\
\label{eq:phi.mu.T}
\varphi\star \mu_{\mathcal T} &= \sum_{\gamma\in \Gamma} \alpha_c\, \vert \gamma_c(\boldsymbol x)\vert \,\vert D\varphi(m_c({\boldsymbol x}))\vert \, \delta_{\varphi(m_c({\boldsymbol x}))} \otimes \zeta_c
\end{align}
Define the action of $\phi$ on $\mathcal T$ by $\varphi\cdot \mathcal{T} = (S,\varphi(\boldsymbol x),\boldsymbol{\alpha}, \boldsymbol \zeta)$. Then we have
\begin{align}
\label{eq:action.discrete-mesh}
\mu_{\varphi\cdot \mathcal T} &= \sum_{\gamma\in \Gamma} \alpha_c\, \vert \gamma_c(\varphi(\boldsymbol x))\vert \, \delta_{m_c(\varphi({\boldsymbol x})))} \otimes \zeta_c \\
& \nonumber
\simeq \varphi\star \mu_{\mathcal T} \ ,
\end{align}
\end{subequations}
with the approximations $\varphi(m_c(\boldsymbol x)) \simeq m_c(\varphi(\boldsymbol x))$ and
\[
\vert D\varphi(m_c(\boldsymbol x))\vert  \simeq \frac{\vert \gamma_c(\phi(\boldsymbol x))\vert }{\vert \gamma_c(\boldsymbol x)\vert }.
\]

\end{definition}


\section{LDDMM for discrete image varifolds}
At the core of geodesic brain mapping is our norm-distance that we define on the space of varifold-brains.
For this we define a family of
varifold norms that measure the size of the difference between elements in the space.
\subsection{Image-varifold LDDMM}
Let $K_1$ and $K_2$ be two positive kernels respectively on $\mathbb R^d$ and $\mathcal F$. This means that $K_1$ is defined on $\mathbb R^d\times \mathbb R^d$ with values in $\mathbb R$ such that, for all $n>0$, all $x_1, \ldots, x_n\in \mathbb R^d$ and $\lambda_1, \ldots, \lambda_n\in \mathbb R$, one has
\[
\sum_{k,l=1}^n \lambda_k\lambda_l K_1(x_k, x_l) \geq 0.
\]
The same condition is assumed for $K_2$, replacing $\mathbb R^d$ by $\mathcal F$. A natural choice for $K_1$, the spatial kernel, is to use radial basis functions (such as Gaussian, or Mat\'ern kernels \citep{Aronszajn-1950, schaback2006kernel, cheney2009course,iske2018approximation}).
Image kernels for categorical features
are provided by positive definite matrices with size equal to the number of features, with entries equal to $K_2(f,g)$ for all pairs $f,g \in \mathcal F$.

Define the varifold inner product by the condition, holding for all $x,y\in \mathbb R^d$ and $f,g\in \mathcal F$, 
\begin{equation}
\langle\delta_x\otimes \delta_f, \delta_y\otimes \delta_g\rangle_{W^*} = K_1(x,y)K_2(f,g).
\label{product-kernel}
\end{equation}
By linearity, this defines a unique inner product between measures over $\mathbb R^d \times \mathcal F$. The notation ``$W^*$'' comes from the fact that this inner product can be interpreted as that associated with the dual space of the reproducing kernel Hilbert space on functions defined on $\mathbb R^d \times \mathcal F$ associated with the tensor product of $K_1$ and $K_2$. Similarly, for two measures $\zeta, \zeta'$ on $\mathcal F$, we will write
\[
\langle \zeta, \zeta ' \rangle_{W_2^*} = \int_{\mathcal F \times \mathcal F} K_2(f, g) d\zeta(f) d\zeta'(g).
\]

Let $V$ be a space of vector fields, i.e., of functions $v: \mathbb R^d\to \mathbb R^d$. We assume that $V$ is equipped with an inner product denoted $\langle \cdot ,\cdot \rangle_V$ and associated norm $\|\cdot\|_V$ and forms furthermore a Hilbert space so that it is complete for its norm topology. We also assume that elements in $V$ have at least one continuous derivative, and more precisely that there exists a constant $A$ such that, for any $x\in \mathbb R^d$ and any $v\in V$,
\begin{equation}
    \label{eq:embed.V}
\vert v(x)\vert  + \vert Dv(x)\vert  \leq A \|v\|_V.
\end{equation}
(Here, we let $\vert Dv(x)\vert $ denote any matrix norm applied to the $d\times d$ differential of $v$.)

The LDDMM (discrete) varifold matching problem is, given two varifolds structures $\mathcal T^{(k)} = (S^{(k)}, \boldsymbol{x}^{(k)}, \boldsymbol{\alpha}^{(k)}, \boldsymbol{\zeta}^{(k)})$, $S^{(k)}=(I^{(k)},C^{(k)}), k=0,1$, template and target respectively,  
the variational problem is:
\begin{problem}

\begin{subequations}
\begin{align}
\label{eq:lddmm.unreduced.0}
 \inf_{\substack{v(\cdot) \in L^2([0,1],V) } }
&\int_0^1 \|v(t)\|_V^2 dt + \frac{1}{\sigma^2} \|\mu_{\varphi(1)\cdot \mathcal T^{(0)}} - \mu_{\mathcal T^{(1)}}\|^2_{W^*}
\\
\text{with} \ \ \ \ \ \ \ \ \ \ \
&\partial_t \varphi(t) = v(t)\circ \varphi(t) \ .
\end{align}
\end{subequations}
\end{problem}
This formulation follows the common pattern of other LDDMM algorithms \citep{Beg-Miller-Trouve-Younes-2005,Vaillant-Glaunes-2005,glaunes-trouve-younes-2004,charon2013varifold,younes2019shapes}. Because the action only affects vertexes, this problem can be reduced using an RKHS argument introduced for the registration of landmarks \citep{joshi-miller-landmark-matching-99,glaunes-trouve-younes-2004}, discrete curves \cite{glaunes2004diffeomorphic} and surfaces \citep{Vaillant-Glaunes-2005}  with an optimization over point-set trajectories.

More precisely, \cref{eq:embed.V} implies that $V$ is a reproducing kernel Hilbert space, and because it is a space of vector fields, its kernel $K_V$ is defined on $\mathbb R^d\times \mathbb R^d$ and takes values in the space of $d\times d$ matrices with real coefficients. This kernel is such that, for any fixed $y\in \mathbb R^d$, $a\in \mathbb R^d$, (i) the vector field $K_V(\cdot, y)a : x\mapsto K_V(x, y)a$ belongs to $V$ and (ii)
\[
\langle K(\cdot, y)a, v\rangle_V = a^T v(y)
\]
for all $v\in V$. One can then show that the optimal $v$ 
takes the form
\begin{equation}
\label{eq:v.reduced}
    v(t,\cdot) = \sum_{i\in I^{(0)}} K(\cdot, z_i(t)) a_i(t)
\end{equation}
where $\boldsymbol{a}(t) = (a_i(t), i\in I^{(0)})$ are free $d$-dimensional vectors and the $d$-dimensional points $\boldsymbol{z}(t) = (z_i(t), i\in I^{(0)})$ are defined in the following reduced problem. 
\begin{problem}
\label{prob:vp.1}
\begin{subequations}
\begin{align}
\label{eq:lddmm}
 \inf_{\substack{a_i(t), t \in [0,1],i\in I^{(0)}} }
 &
\sum_{i,j\in I^{(0)}}\int_0^1 a_i(t)^T K(z_i(t), z_{j}(t)) a_{j}(t) dt \\
& \qquad + \frac{1}{\sigma^2} 
\|\mu_{(
S^{(0)}, \boldsymbol{z}(1), \boldsymbol{\alpha}^{(0)}
, \boldsymbol{\zeta}^{(0)})} - \mu_{\mathcal T^{(1)}}\|^2_{W^*}
\intertext{with}
& \label{eq:state}
\partial_t z_i(t) = \sum_{j\in I^{(0)}} K(z_i(t), z_{j}(t)) a_{j}(t) , \quad z_i(0) = x^{(0)}_i, \ i\in I^{(0)} \ .
\end{align}
\end{subequations}
\end{problem}

\subsection{Gradient of the objective function}
The optimization in \cref{prob:vp.1} is with respect to the trajectories $\boldsymbol{a}(t) = (a_i(t), i\in I^{(0)})$. The optimal diffeomorphism $\varphi(t, \cdot)$ in \cref{eq:lddmm.unreduced.0} is then given by $\varphi(t,x) = y(t)$ where $y(\cdot)$ solves the ODE:
\[
\partial_t y = \sum_{i\in I^{(0)}} K(y(t), z_i(t)) a_i(t)
\]
with $y(0) = x$.

The gradient, with respect to $\boldsymbol{a}(\cdot)$, of the objective function in \cref{eq:lddmm}  is obtained with the adjoint method and works as follows. Introduce a co-state $\boldsymbol p(\cdot) = (p_i(\cdot), i\in I^{(0)})$. Define the Hamiltonian, evaluated at configurations $\boldsymbol {\tilde p}, \boldsymbol {\tilde z}, \boldsymbol {\tilde a}$ (that do not depend on time):
\[
H(\boldsymbol {\tilde p}, \boldsymbol {\tilde z}, \boldsymbol {\tilde a}) = \sum_{i,j\in I^{(0)}} \tilde p_i^T K(\tilde z_i, \tilde z_j) \tilde a_j - \sum_{i,j\in I^{(0)}} \tilde a_i^T K(\tilde z_i, \tilde z_{j}) \tilde a_{j}.
\]
Then the gradient is computed in two steps. One first solves the system: 
\begin{subequations}
\begin{equation}
    \label{eq:h.system}
    \begin{cases}
    \partial_t \boldsymbol z(t) = \partial_{\boldsymbol {\tilde p}} H(\boldsymbol p(t),\boldsymbol z(t),\boldsymbol a(t))
    \\
    \partial_t \boldsymbol p(t) = - \partial_{\boldsymbol {\tilde z}} H(\boldsymbol p(t),\boldsymbol z(t),\boldsymbol a(t))
\end{cases}
\end{equation}
with boundary conditions $\boldsymbol{z}(0) = \boldsymbol{x}^{(0)}$ and:
\begin{equation}
    \label{eq:p1}
\boldsymbol p(1) = - \nabla U(\boldsymbol{z}(1))
\end{equation}
where
\begin{equation}
\label{eq:U}    
U(\boldsymbol{x}) = 
\frac{1}{\sigma^2} \|\mu_{(
S^{(0)}, \boldsymbol{x}, \boldsymbol{\alpha}^{(0)}, \boldsymbol{\zeta}^{(0)})} - \mu_{\mathcal T^{(1)}}\|^2_{W^*}.
\end{equation}
\end{subequations}
The gradient of the objective function is then given by: 
\[
t\mapsto - \partial_{\boldsymbol {\tilde a}} H(\boldsymbol p(t),\boldsymbol z(t) ,\boldsymbol{a}(t))^T \boldsymbol{p}(t).
\]
The details of this computation have been provided in multiple places (see references above) and the only computation that is specific to our discussion is the evaluation of \eqref{eq:p1} on which we now focus.

\subsection{Derivative of the data attachment term in 3D and 2D}
We now examine both the 3D and 2D cases using similar arguments for the computation of the derivative of $U$ in \cref{eq:U}. The computation involves the inward weighted inward normal vectors to the faces of the simplices.
In 3D, for the tetrahedron
$
\gamma_c(\boldsymbol x) = \left\{ \sum_{i=0}^3 a_i x_{c_i}, a_i\geq 0 , \sum_{i=0}^3 a_i=1\right\}$, these vectors are
 \begin{equation}
     \label{3d-normals}
\begin{aligned}
n_{c,0} &= - ( x_{c_2} - x_{c_1}) \times (x_{c_3} - x_{c_1})\\
n_{c,1} &= ( x_{c_2} - x_{c_0}) \times (x_{3} - x_{c_0})\\
n_{c,2} &= -( x_{c_1} - x_{c_0}) \times (x_{c_3} - x_{c_0})\\
n_{c,3} &= ( x_{c_1} - x_{c_0}) \times (x_{c_2} - x_{c_0}) \ .
\end{aligned}
 \end{equation}

In 2D, with  tetrahedra replaced by triangles, normals to triangle edges are defined as follow.
Letting $J = \begin{pmatrix}
0&-1\\1&0
\end{pmatrix}
$, the normals attached to each triangle are
\begin{equation}
\label{2d-normals}
\begin{aligned}
n_{c,0} &= J(x_{c_2}-x_{c_1})\\
n_{c,1} &= J(x_{c_0}-x_{c_2})\\
n_{c,2} &= J(x_{c_1}-x_{c_0})\ .
\end{aligned}
\end{equation}
Both \cref{3d-normals} and \cref{2d-normals} obey the general definition in which, for $j\geq 1$, $n_{c,j}$ is the unique vector such that
\begin{subequations}
\begin{equation}
    \label{dd-normals.1}
n_{c,j}^Tu = (-1)^{j-1} \mathrm{det}\bigg(u, x_{c_{1}} - x_{c_0}, \ldots, x_{c_{j-1}} - x_{c_0}, x_{c_{j+1}} - x_{c_0}, \ldots, x_{c_{d}} - x_{c_0}\bigg)
\end{equation}
for all $u\in \mathbb R^d$, and 
\begin{equation}
    \label{dd-normals.2}
n_{c,0}^Tu = - \mathrm{det}\bigg(u, x_{c_2} - x_{c_1}, \ldots, x_{c_d} - x_{c_1}\bigg)
\end{equation}
for all $u\in \mathbb R^d$. They furthermore satisfy
\begin{equation}
        \label{dd=normals.3}
        \sum_{j=0}^d n_{c,j} = 0.
\end{equation}
(This property can be easily checked for $d=2$ or 3.)
\end{subequations}
.

We now calculate the derivative of the data attachment term.
\begin{proposition}
\label{prop:1}
Let $S^{(k)}=(I^{(k)},C^{(k)}), k=0,1$.
then the derivative of $U$ in \cref{eq:U} with respect to $x_j$ is  
\[
\partial_{x_j} U(\boldsymbol{x}) = \frac2{\sigma^2} \partial_{x_j} \langle \mu_{(S^{(0)},\boldsymbol x, \boldsymbol{\alpha}^{(0)}, \boldsymbol \zeta^{(0)})}, \mu_{(S^{(0)}, \boldsymbol{{\tilde x}}, \boldsymbol{\alpha}^{(0)}, \boldsymbol{\zeta}^{(0)})} - \mu_{\mathcal T^{(1)}} \rangle_{W^*}.
\]
evaluated with $\boldsymbol{\tilde x} = \boldsymbol x$, with
\begin{multline}
\label{partial-derivative-inner-product.0}
\partial_{x_j} \langle \mu_{(S,\boldsymbol x,\boldsymbol{\alpha}, \boldsymbol \zeta)}, \mu_{(S',\boldsymbol x', \boldsymbol{\alpha}', \boldsymbol  \zeta')}\rangle_{W^*} = \\
\sum_{c\in C: j\in c} \sum_{c'\in C'} \alpha_{c} \alpha'_{c'}\, \vert \gamma_{c'}(\boldsymbol x')\vert  \langle \zeta_{c}, \zeta'_{c'}\rangle_{W_2^*}  \bigg(\frac1{d+1}  \vert \gamma_{c}(\boldsymbol x)\vert \,\nabla_1 K_1(m_{c}(\boldsymbol x), m_{c'}(\boldsymbol x')) \\
+ \frac1{d!} K_1(m_{c}(\boldsymbol x), m_{c'}(\boldsymbol x')) n_{c}(x_j)\bigg)
\end{multline}
where $n_{c}(x_j)$ is the 3D normal to the face opposed to $x_j$ in $\gamma({c})$ of Eqn.
 \eqref{3d-normals}.


\end{proposition}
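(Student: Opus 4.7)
The plan is to reduce $\partial_{x_j}U$ to a single-slot derivative by bilinearity and symmetry, expand the resulting inner product explicitly using the semi-discrete form \eqref{eq:mu.T}, and then recognize the two product-rule terms as the center and volume contributions in \eqref{partial-derivative-inner-product.0}. For the reduction, I would write
\begin{equation*}
U(\boldsymbol x) = \tfrac{1}{\sigma^2}\bigl(\langle\mu_{\boldsymbol x},\mu_{\boldsymbol x}\rangle_{W^*} - 2\langle\mu_{\boldsymbol x},\mu_{\mathcal T^{(1)}}\rangle_{W^*} + \mathrm{const}\bigr),
\end{equation*}
where $\mu_{\boldsymbol x} := \mu_{(S^{(0)},\boldsymbol x,\boldsymbol\alpha^{(0)},\boldsymbol\zeta^{(0)})}$. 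Symmetry of $\langle\cdot,\cdot\rangle_{W^*}$ gives $\partial_{x_j}\langle\mu_{\boldsymbol x},\mu_{\boldsymbol x}\rangle_{W^*} = 2\,\partial_{x_j}\langle\mu_{\boldsymbol x},\mu_{\tilde{\boldsymbol x}}\rangle_{W^*}|_{\tilde{\boldsymbol x}=\boldsymbol x}$, and combining this with the derivative of the cross term produces the first displayed identity in the proposition, with the factor $2/\sigma^2$.

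For the explicit pairing, substituting \eqref{eq:mu.T} into the bilinear extension of \eqref{product-kernel} yields
\begin{equation*}
\langle\mu_{(S,\boldsymbol x,\boldsymbol\alpha,\boldsymbol\zeta)}, \mu_{(S',\boldsymbol x',\boldsymbol\alpha',\boldsymbol\zeta')}\rangle_{W^*} = \sum_{c,c'}\alpha_c\alpha'_{c'}|\gamma_c(\boldsymbol x)||\gamma_{c'}(\boldsymbol x')|\,K_1(m_c(\boldsymbol x),m_{c'}(\boldsymbol x'))\,\langle\zeta_c,\zeta'_{c'}\rangle_{W_2^*}.
\end{equation*}
Only summands with $j\in c$ contribute, and the dependence on $x_j$ enters through $m_c(\boldsymbol x)$ and $|\gamma_c(\boldsymbol x)|$. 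The center part is immediate: $m_c(\boldsymbol x)=\tfrac{1}{d+1}\sum_{i=0}^d x_{c_i}$ gives $\partial_{x_j}m_c = \tfrac{1}{d+1}I$ whenever $j\in c$, so the chain rule contributes $\tfrac{1}{d+1}|\gamma_c(\boldsymbol x)|\,\nabla_1 K_1(m_c(\boldsymbol x),m_{c'}(\boldsymbol x'))$, matching the first bracketed term in \eqref{partial-derivative-inner-product.0}.

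The main obstacle is the volume derivative, for which I would establish $\partial_{x_j}|\gamma_c(\boldsymbol x)| = \tfrac{1}{d!}\,n_c(x_j)$ uniformly in $j\in c$. For $j=c_k$ with $k\ge 1$, multilinearity of the determinant in \eqref{eq:volume} gives the directional derivative along $u\in\mathbb R^d$ as $\tfrac{1}{d!}\det(x_{c_1}-x_{c_0},\ldots,u,\ldots,x_{c_d}-x_{c_0})$ with $u$ placed in the $k$th slot; transposing $u$ to the first column at the cost of $(-1)^{k-1}$ matches the defining formula \eqref{dd-normals.1} for $n_{c,k}$. The delicate case is $j=c_0$, since $x_{c_0}$ appears in every column of the matrix in \eqref{eq:volume}: the chain rule here gives $\partial_{x_{c_0}}|\gamma_c(\boldsymbol x)| = -\sum_{k=1}^d\tfrac{1}{d!}\,n_{c,k}$, which collapses to $\tfrac{1}{d!}\,n_{c,0}$ by the sum-to-zero identity \eqref{dd=normals.3} (and is consistent with the alternate expression \eqref{dd-normals.2}). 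Combining the center and volume contributions then yields \eqref{partial-derivative-inner-product.0}, whose specializations in dimensions 2 and 3 use the explicit normals \eqref{2d-normals} and \eqref{3d-normals}.
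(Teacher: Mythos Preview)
Your proposal is correct and follows essentially the same approach as the paper's proof: both expand the inner product via \eqref{eq:mu.T} and \eqref{product-kernel}, differentiate the center $m_c$ to obtain the $\tfrac{1}{d+1}$ term, and differentiate the volume determinant using multilinearity together with \eqref{dd-normals.1} and \eqref{dd=normals.3} to obtain the $\tfrac{1}{d!}\,n_c(x_j)$ term. The only cosmetic differences are that the paper works with a full directional perturbation $\boldsymbol h$ and extracts the coefficient of $h_j$ at the end (handling the $j=c_0$ case implicitly via $\sum_{k=1}^d(h_{c_k}-h_{c_0})^T n_{c,k}=\sum_{k=0}^d h_{c_k}^T n_{c,k}$), whereas you compute $\partial_{x_j}$ directly and treat $j=c_0$ as a separate case; and the paper establishes the reduction of $\partial_{x_j}U$ to the single-slot derivative at the end rather than the beginning.
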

See the appendix for a proof.

\section{Image varifolds and spatial transcriptomic data.}

Spatially resolved transcriptomics probe a large number of targeted mRNA molecules with high-resolution location information  \citep{RN15}. After post-processing, this data can take various forms, for example represented as a 2D image with a large number of channels (associated with the measured gene set), as a list of points in space with attached gene count information (reconstructing single-cell RNAseq information combined with location \citep{RN27}), or simply as a long list of single mRNA molecules with their detected location. We here consider a general representation that includes most situations of interest. We let $\mathcal G$ denote the set of targeted genes, whose size $\vert \mathcal G\vert =N$ can vary from several hundreds to several thousands.

We assume that the input data is a large family indexing a spatial unit  $j\in\mathcal J$ associating a location and a list of genes, in the form $y_{j},  (g_{j, k}, k=1, \ldots n_j)$, indicating that genes $g_{j, 1}, \ldots, g_{j, n_j}$ were detected at location $y_j$ (genes in the list may be repeated). 
A natural representation becomes the number of detections of gene $g$ at location $y_j$, denoted $$(y_j, (\nu_j(g), g\in \mathcal G))\ , j \in \mathcal J \ . $$ 
This representation includes raw spatially resolved transcriptomics data without additional processing with $n_j = 1$ for all $j$, as well as cell-centered data where  $y_j$ is the cell center and the $\nu_j(g)$'s are the gene counts  associated with that cell. Preprocessing steps which cluster the raw data can be associated with this representation,
the example we explore being a pre-analysis identifying single cells and cell types \citep{Moffitteaau5324} (see \cref{sec:cell.label}).


To construct our varifolds $\mu_{\mathcal T} = \sum_{c\in C} \alpha_c\, \vert \gamma _c({\boldsymbol x})\vert \, \delta_{m_c({\boldsymbol x})} \otimes \zeta_c $  we
assume a spatial resolution is given as a length parameter $\lambda$ in $\SI{}{\micro\meter}$ defining a regular mesh that is first built within a bounding box containing the data and then pruned by deleting all simplices that contain no point $y_j$, $j \in \mathcal J$. The pruned mesh provides the components $(I, C, \boldsymbol{x})$ supporting the image varifold.
Our simplicial family $\gamma_c(\boldsymbol{x}),c \in C$ is formed from
the vertices,
from which we define the image weights and the probability laws $(\alpha_c,\zeta_c), c \in C$ on the functional features. There are alternative choices for these features, listed below, each of them leading to specific definitions of $\alpha_c, \zeta_c$ and image kernel $K_2$.
\subsection{Gene Features}
The image weights $\alpha_c$ represent densities (counts per unit volume). There are two options for their definitions, namely
the density of detected mRNA molecules, or 
the density of points $y_j$, $j\in \mathcal J$, which makes sense to consider, e.g.,  if indexes $j\in \mathcal J$ enumerate single cells. In both cases, given that $\zeta_c$ is a probability distribution on features which we first take as the genes $\mathcal F = \mathcal G$ with $f=g \in \mathcal G$, we let $\zeta_c(g)$ be the frequency of counts for gene $g \in \mathcal G$ relative to all the counts 
$y_j\in \gamma_c(\boldsymbol{x})$. This gives:
\begin{eqnarray}
&&
\zeta_c(g) = \frac{\sum_{j: y_j\in \gamma_c(\boldsymbol{x})} \nu_j(g)}{\sum_{\tilde g\in \mathcal G}\sum_{j: y_j\in \gamma_c(\boldsymbol{x})} \nu_j(\tilde g)}
\\
&& \nonumber
\text{with} 
\ \
\left\{
\begin{aligned}
&\alpha_c = \frac{1}{\vert \gamma_c(\boldsymbol{x})\vert }\sum_{\tilde g\in \mathcal G}\sum_{j: y_j\in \gamma_c(\boldsymbol{x})} \nu_j(\tilde g)\,,
\\[0.5em]
\text{or } & \alpha_c = \frac{1}{\vert \gamma_c(\boldsymbol{x})\vert }\vert \{j: y_j \in \gamma_c(\boldsymbol{x})\} \vert \,.
\end{aligned}
\right.
\end{eqnarray}

We note that, with the second choice for $\alpha_c$, one  disregards the information provided by the total number of counts in each cell. If one thinks of $j$ as indexing cells in a tissue, the first choice for $\alpha_c$ relates to the number of counts per volume, and the second to the number of cells per volume.

Since $\mathcal G$ is a finite set, the image kernel is a positive definite matrix $(K_2(g,\tilde g), g, \tilde g\in \mathcal G)$. The simplest choice for it is to use the identity, i.e., $K_2(g,\tilde g) = 1$ if $g=\tilde g$ and 0 otherwise. 

Note that, in this section and in the next one, the set $\mathcal G$ may be replaced by a representative subset (gene panel) without any change to the discussion.

\subsection{ RNA Count Features}
\label{sec:RNA.count}
Define the RNA count space to be features $\mathcal F = [0, +\infty)^{\mathcal G}$, that is, the set of all families $f = (f(g), g\in \mathcal G)$ with $f(g) \geq 0$. In this context, the simplest choice is to let $\zeta_c$ be a Dirac measure at the averaged counts 
with mRNA count density:
\begin{eqnarray}
&&\zeta_c = \delta_{\bar\nu_c}, \ \ \bar\nu_c(g) = \frac{\sum_{j: y_j\in \gamma_c(\boldsymbol{x})}\nu_j(g)}{\vert \{j:y_j\in \gamma_c(\boldsymbol{x})\}\vert } \ ,
\\
&& \nonumber
\text{
with } \ \alpha_c = \frac{1}{\vert \gamma_c(\boldsymbol{x})\vert }\vert \{j: y_j \in \gamma_c(\boldsymbol{x})\} \vert
\ .
\end{eqnarray}
There is a wide range of possible choices for the image kernel $K_2$, since we are working with quantitative data. The Gaussian kernel $K_2(\nu, \nu') = \exp(-\vert \nu-\nu'\vert ^2/2\sigma^2)$ is a standard example. For our experiments in the next section, we use the product of a Euclidean and a Cauchy kernels, namely
\begin{equation}
    \label{eq:cauchy}
K_2(\nu, \nu') = \frac{\nu^T\nu'}{\sigma^2 + \vert \nu-\nu'\vert ^2}.
\end{equation}
Note that,  since elements of $\mathcal F$ are non-negative, the kernel $K_2$ can be computed in log scale, i.e., applied to $\log(1+\nu), \log(1+\nu')$ instead of $\nu, \nu'$.
\subsection{Cell Label Features}
\label{sec:cell.label}
Now examine the features to be cell types where we assume that the input data has been preprocessed to return cell type labels. We let  $\mathcal F = \{\ell_1, \ldots, \ell_p\}$, the label set, and assume that the data is a list $(y_j,L_j)$ for locations and labels for $j\in \mathcal J$. The measure $\zeta_c$ can the be defined as
\begin{eqnarray}
&&\zeta_c(\ell_k) = \frac{\vert \{j: y_j\in \gamma_c(\boldsymbol{x}), L_j=\ell_k\}\vert }{\vert \{j: y_j \in \gamma_c(\boldsymbol{x})\}\vert }
\\
&& \nonumber
\text{
with } \ \alpha_c = \frac{1}{\vert \gamma_c(\boldsymbol{x})\vert }\vert \{j: y_j \in \gamma_c(\boldsymbol{x})\} \vert
\ .
\end{eqnarray}
It is natural to use a kernel for which labels are orthogonal, i.e., $K_2(\ell, \tilde \ell) = 1$ if $\ell=\tilde \ell$ and 0 otherwise for $\ell,\tilde \ell\in\mathcal F$.

\section{Examples}
\subsection{A toy example}
As a first example, we consider two shapes, supported by discs in 2D or balls in 3D with 2D image feature that can be interpreted as the concentration (between 0 and 1) of some molecule in a substrate. A small disc/ball is compared to a larger one, with the molecule, concentrated in the center, occupying a larger volume in the small shape than in the large one. The registration must therefore globally expand the shape while locally contracting the region occupied by the molecule. This is illustrated in \cref{fig:toy.2d} and \cref{fig:toy.3d}. The 2D disc has 0.8K vertexes and 1.5K triangles and the 3D ball has 4K vertexes and 20K tetrahedra.

\begin{figure}
    \centering
    \includegraphics[trim=15cm 3.5cm 15cm 3.5cm, clip,width=0.22\textwidth]{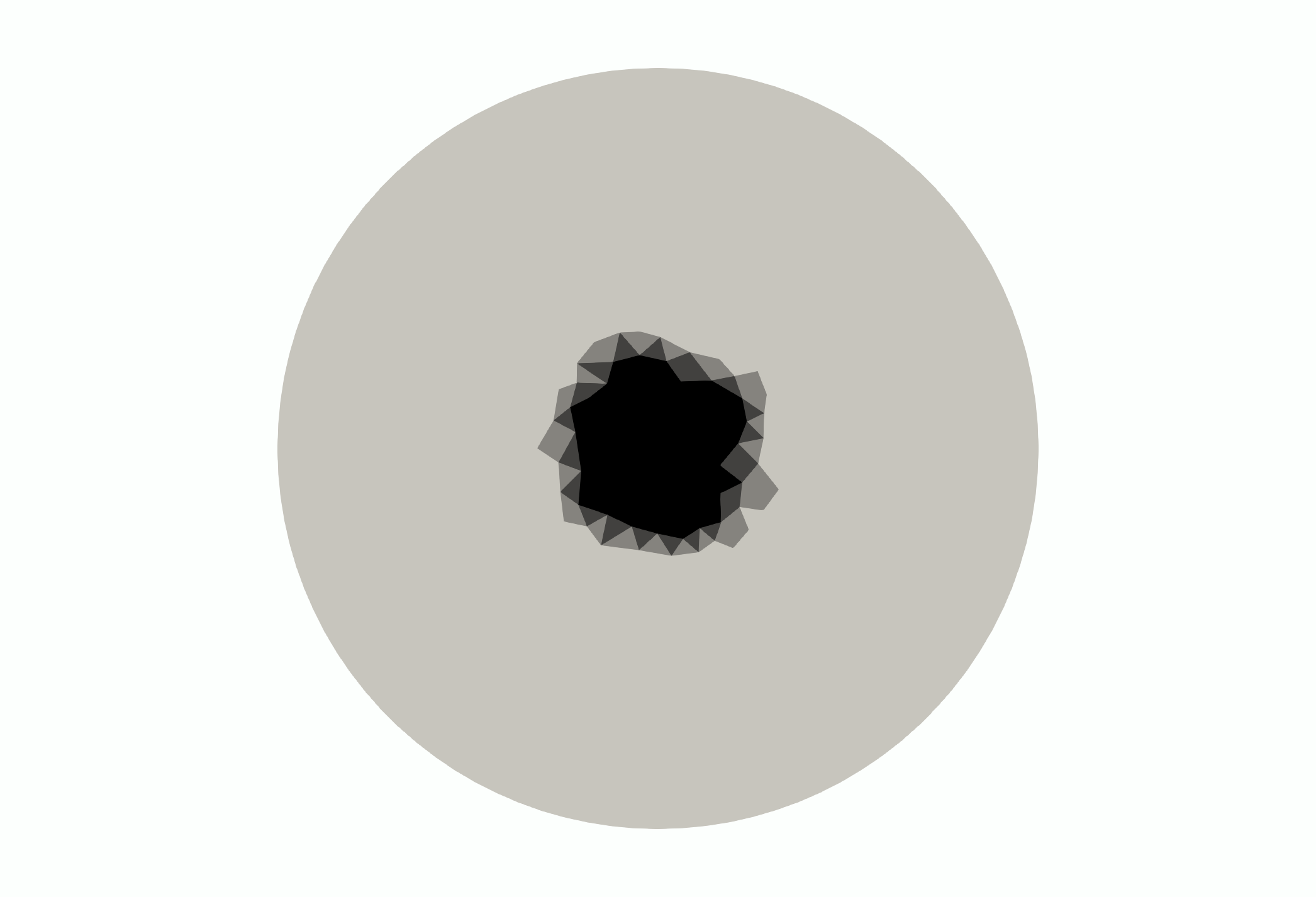}
        \includegraphics[trim=15cm 4cm 15cm 4cm, clip,width=0.22\textwidth]{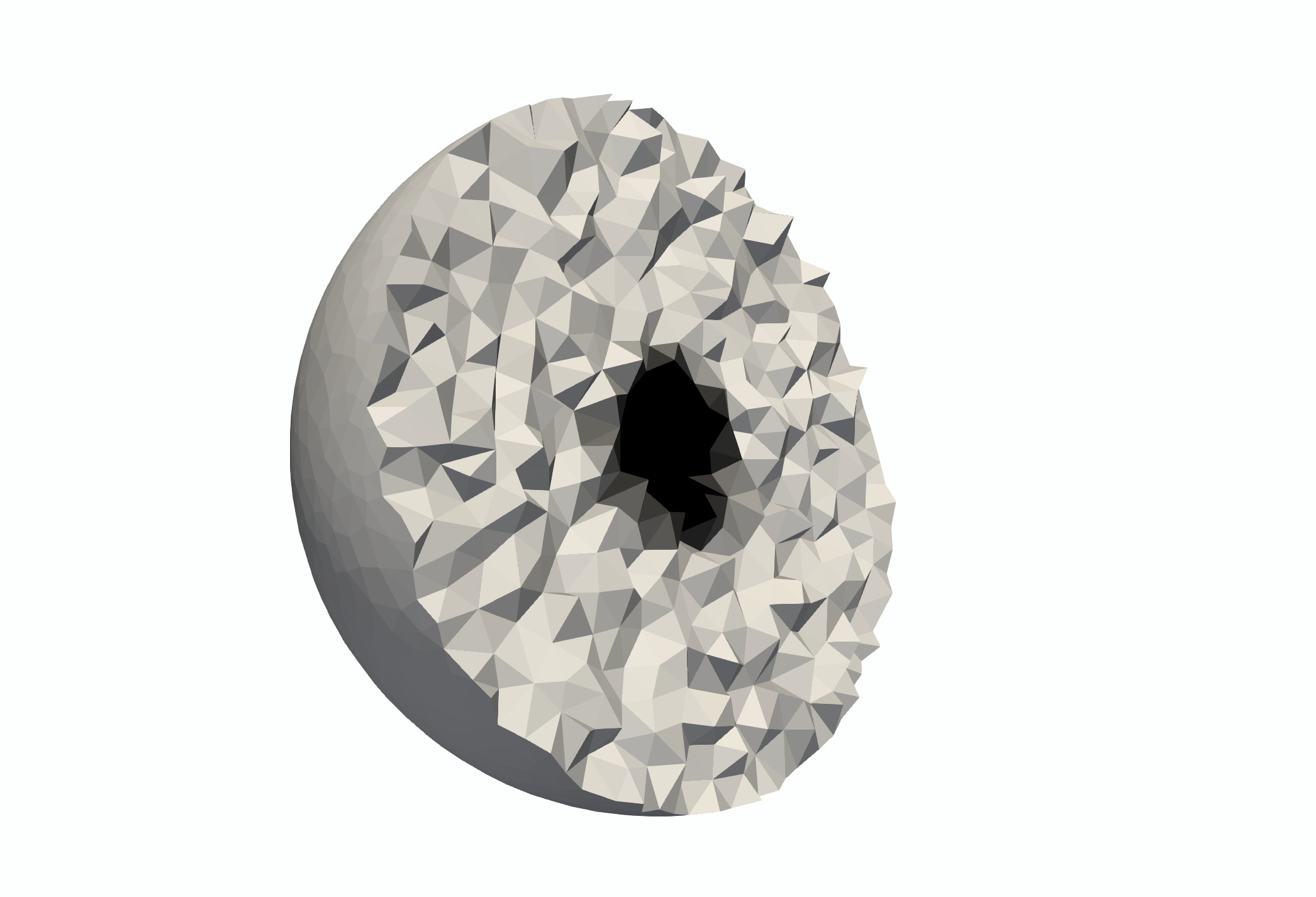}
    \caption{Targets for evolutions in \cref{fig:toy.2d} and \cref{fig:toy.3d}. (The 3D shape is clipped to show interior data.)}
    \label{fig:toy.targets}
\end{figure}

\begin{figure}
    \centering
    \includegraphics[trim=15cm 3.5cm 15cm 3.5cm, clip,width=0.22\textwidth]{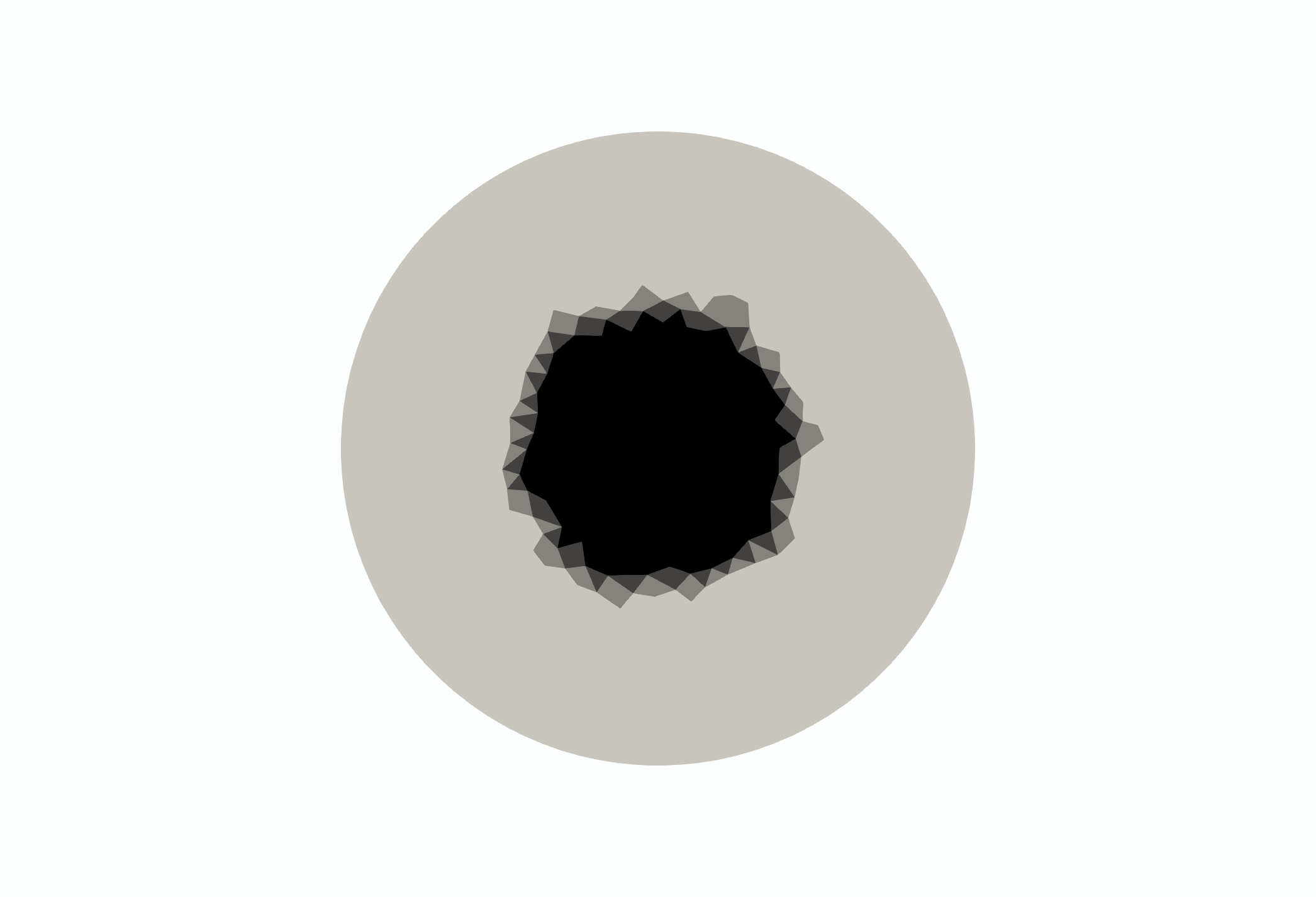}
    \includegraphics[trim=15cm 3.5cm 15cm 3.5cm, clip,width=0.22\textwidth]{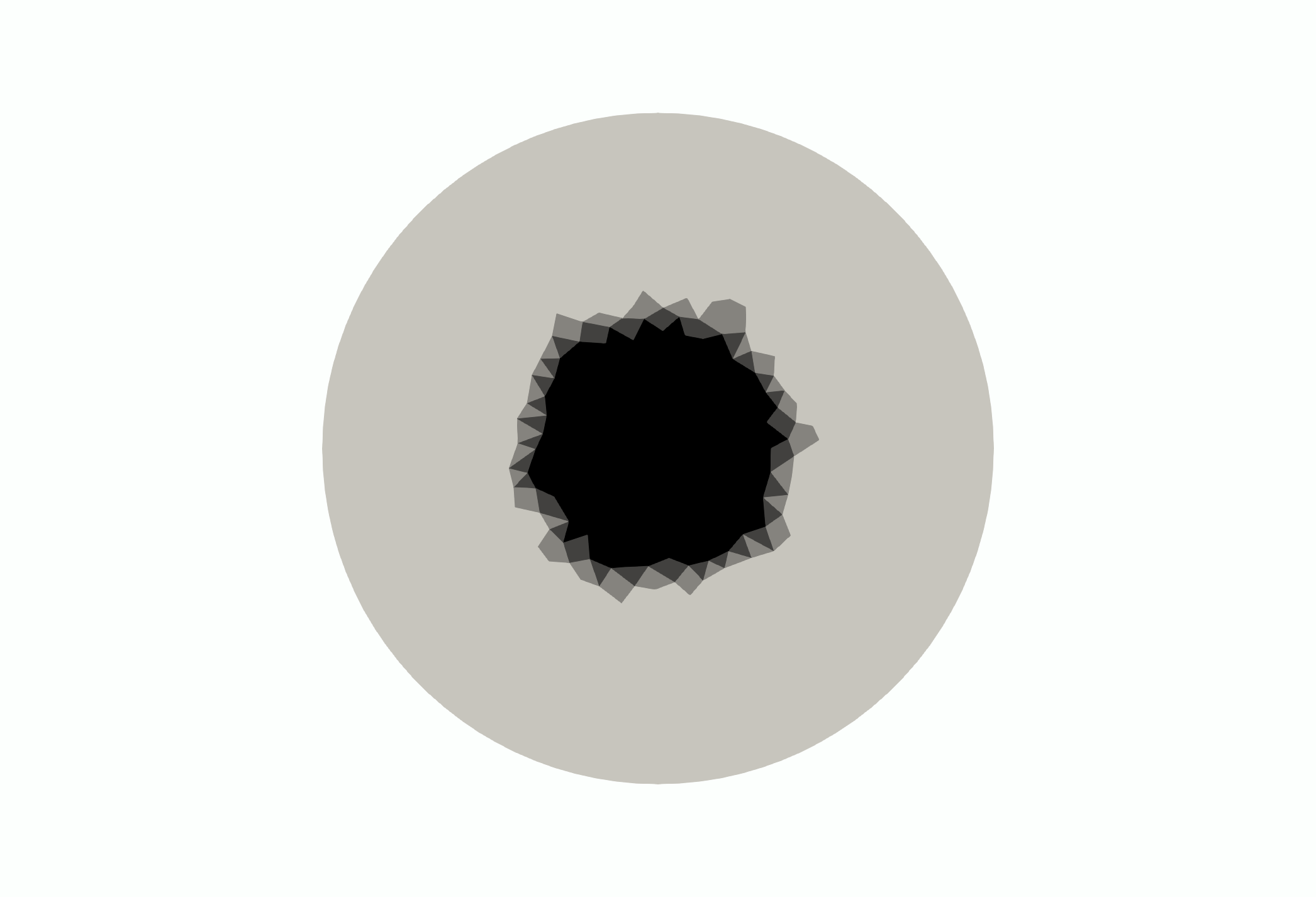}
    \includegraphics[trim=15cm 3.5cm 15cm 3.5cm, clip,width=0.22\textwidth]{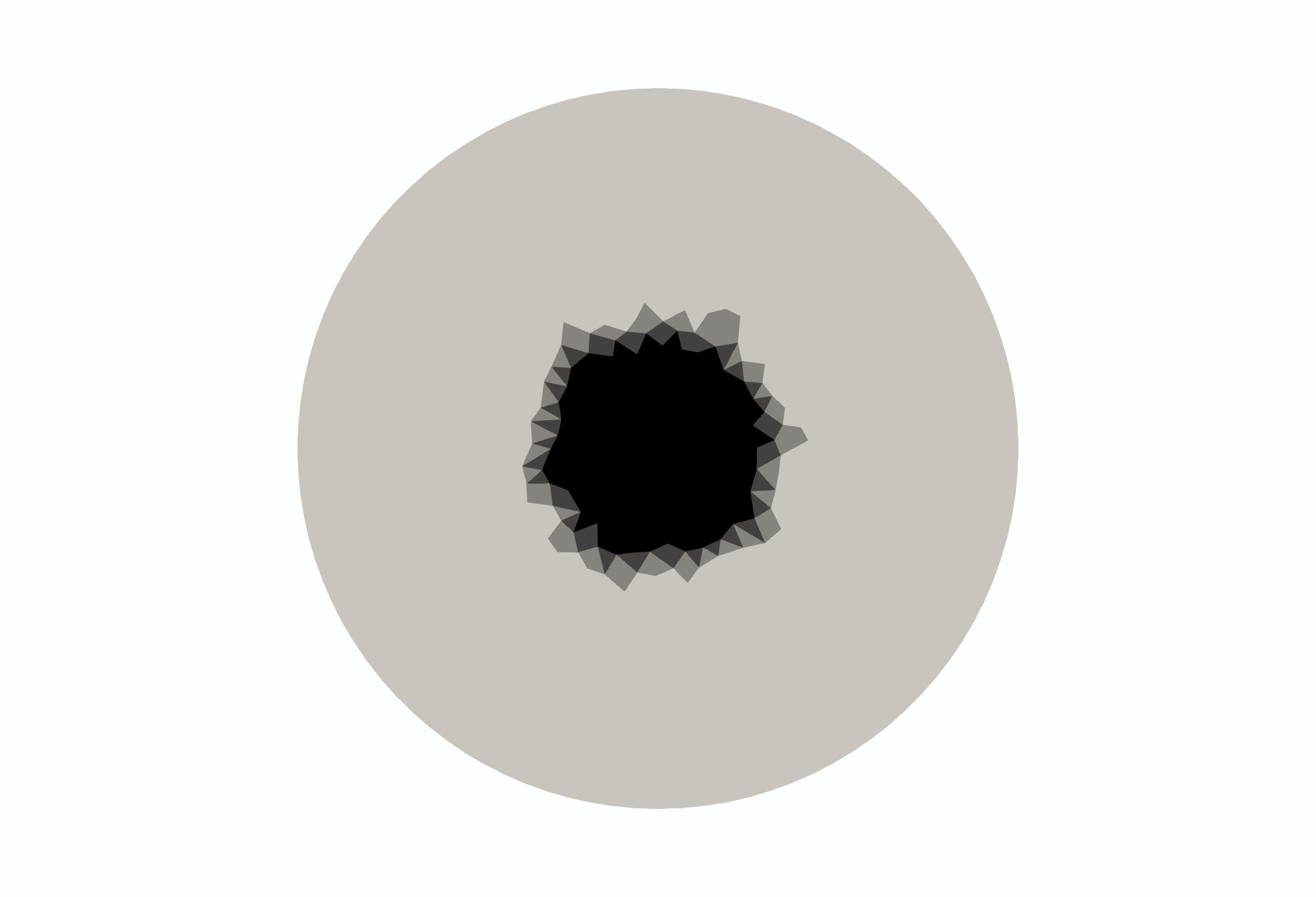}
        \includegraphics[trim=15cm 3.5cm 15cm 3.5cm, clip, width=0.22\textwidth]{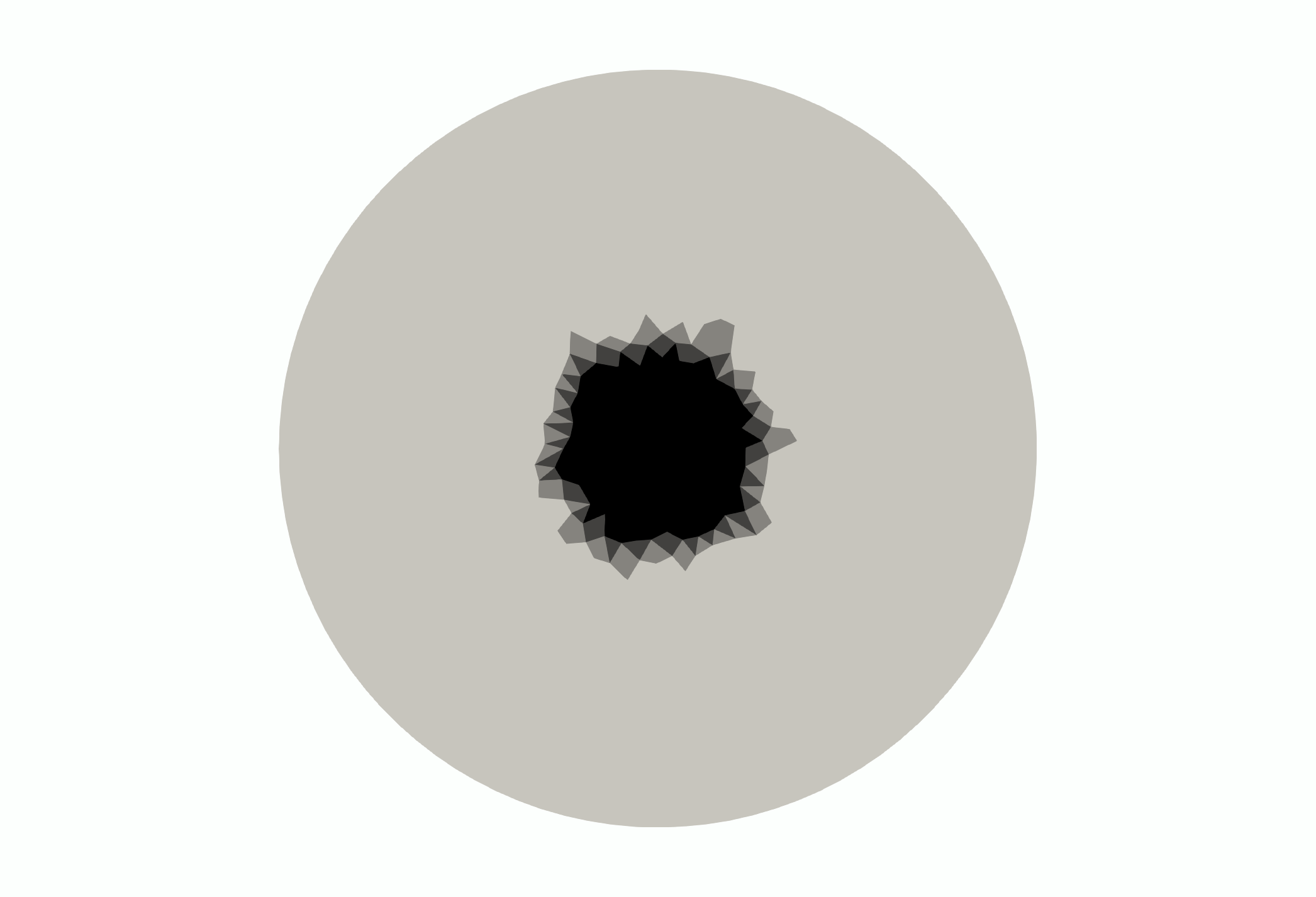}
    \caption{Toy example: 2D. Evolution at times $t=0.0$, $t=0.3$, $t=0.7$ and $t=1.0$. The final image should be compared with the target in \cref{fig:toy.targets}.}
    \label{fig:toy.2d}
\end{figure}

\begin{figure}
    \centering
    \includegraphics[trim=15cm 4cm 15cm 4cm, clip,width=0.22\textwidth]{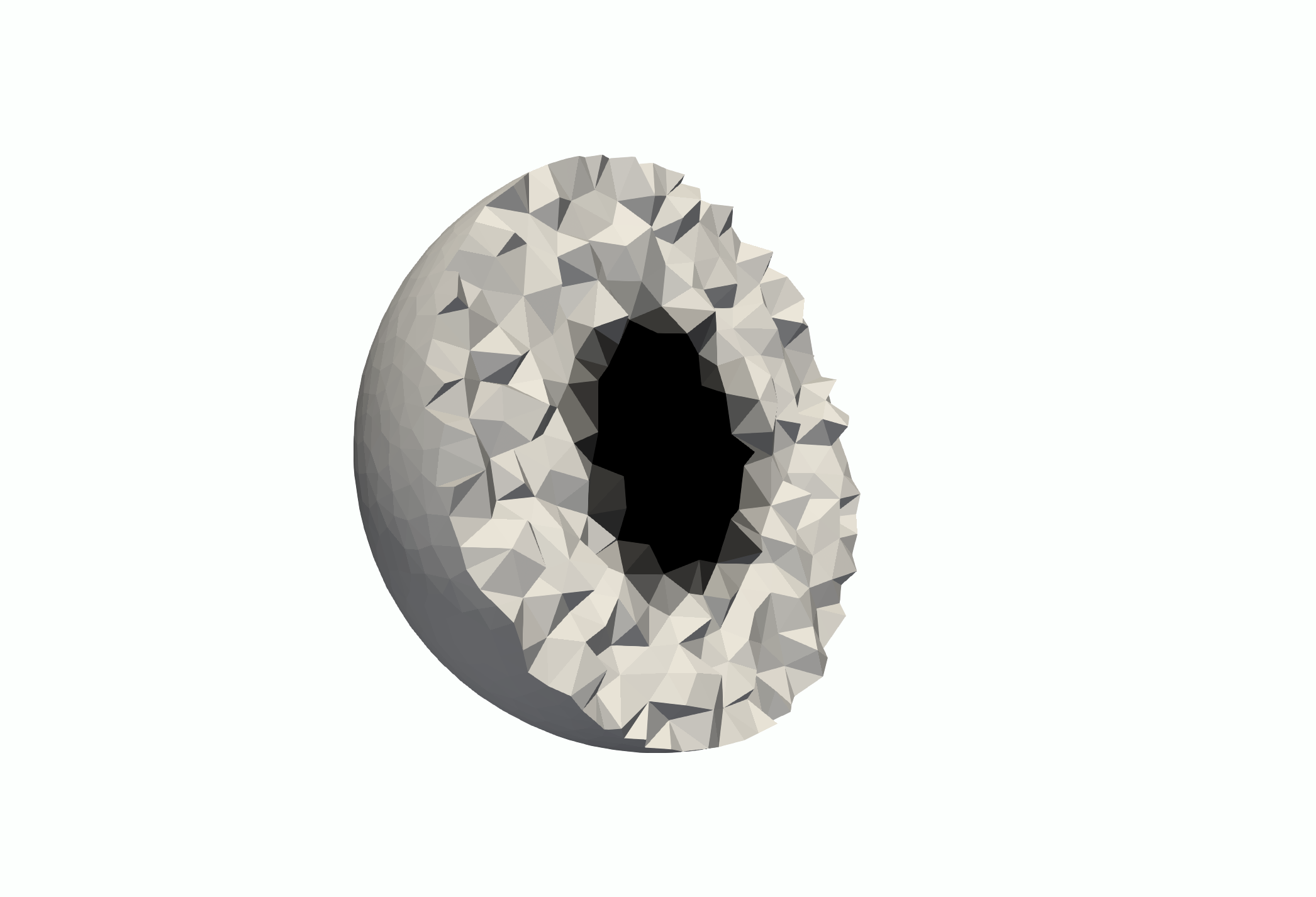}
    \includegraphics[trim=15cm 4cm 15cm 4cm, clip,width=0.22\textwidth]{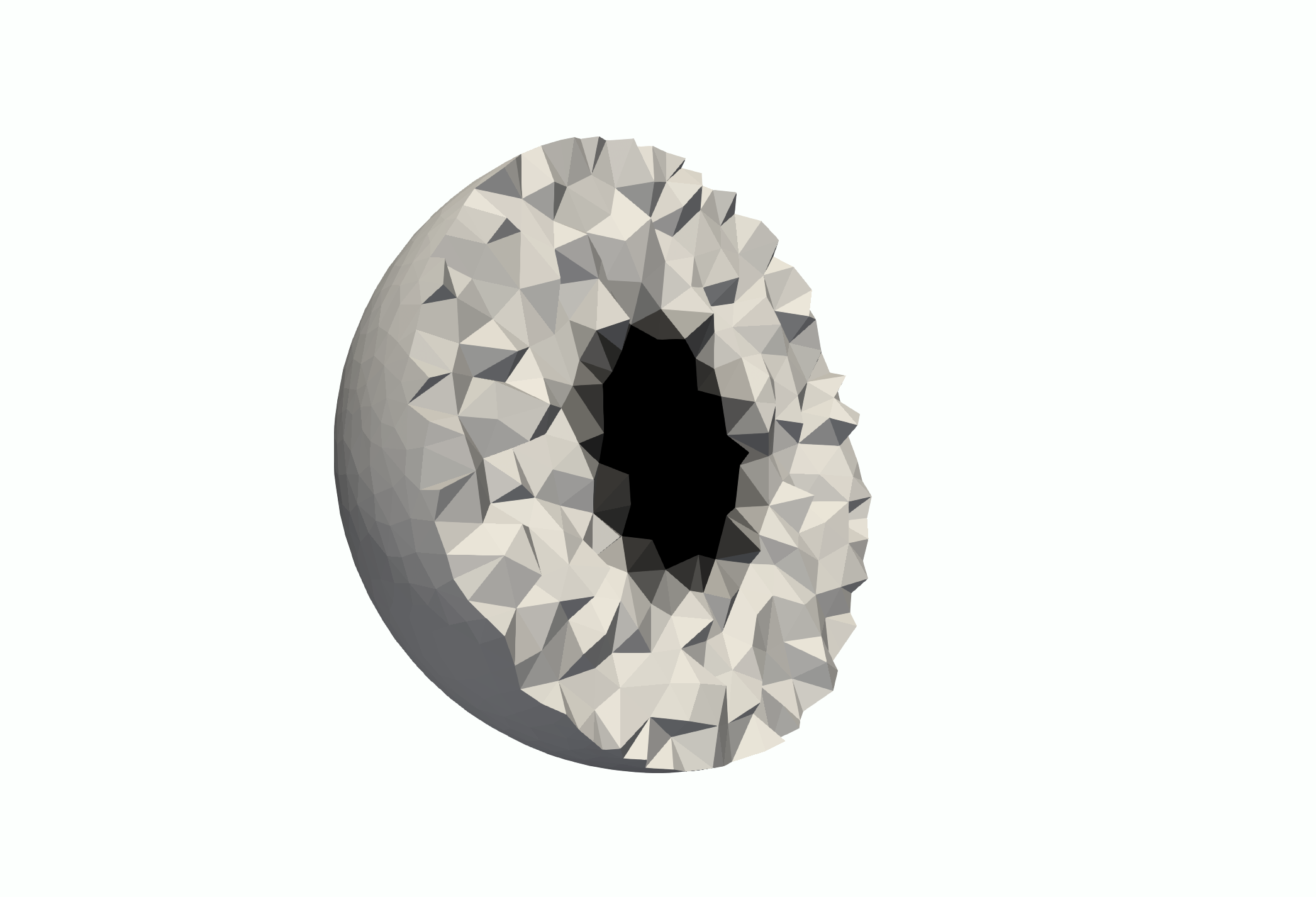}
    \includegraphics[trim=15cm 4cm 15cm 4cm, clip,width=0.22\textwidth]{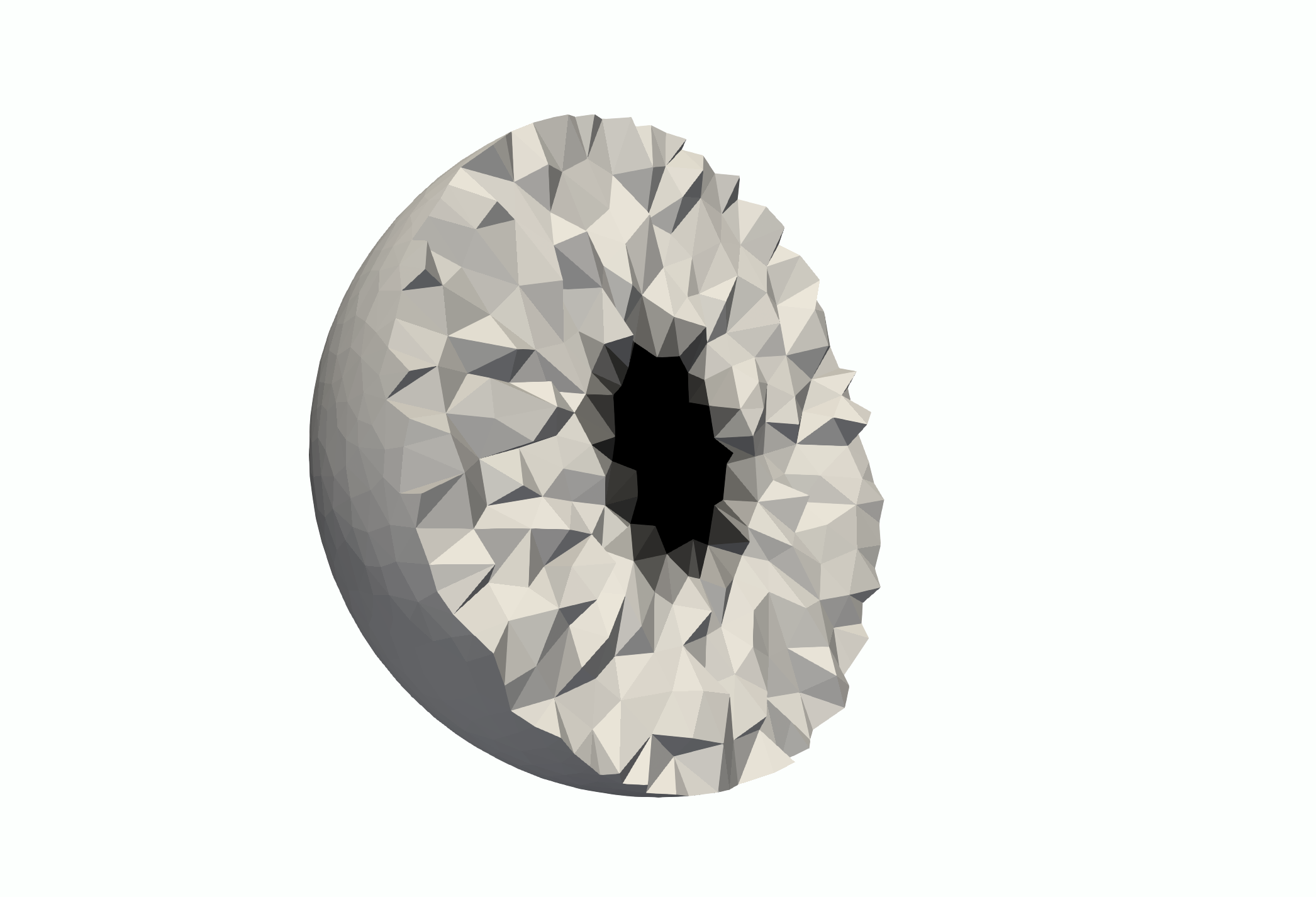}
        \includegraphics[trim=15cm 4cm 15cm 4cm, clip, width=0.22\textwidth]{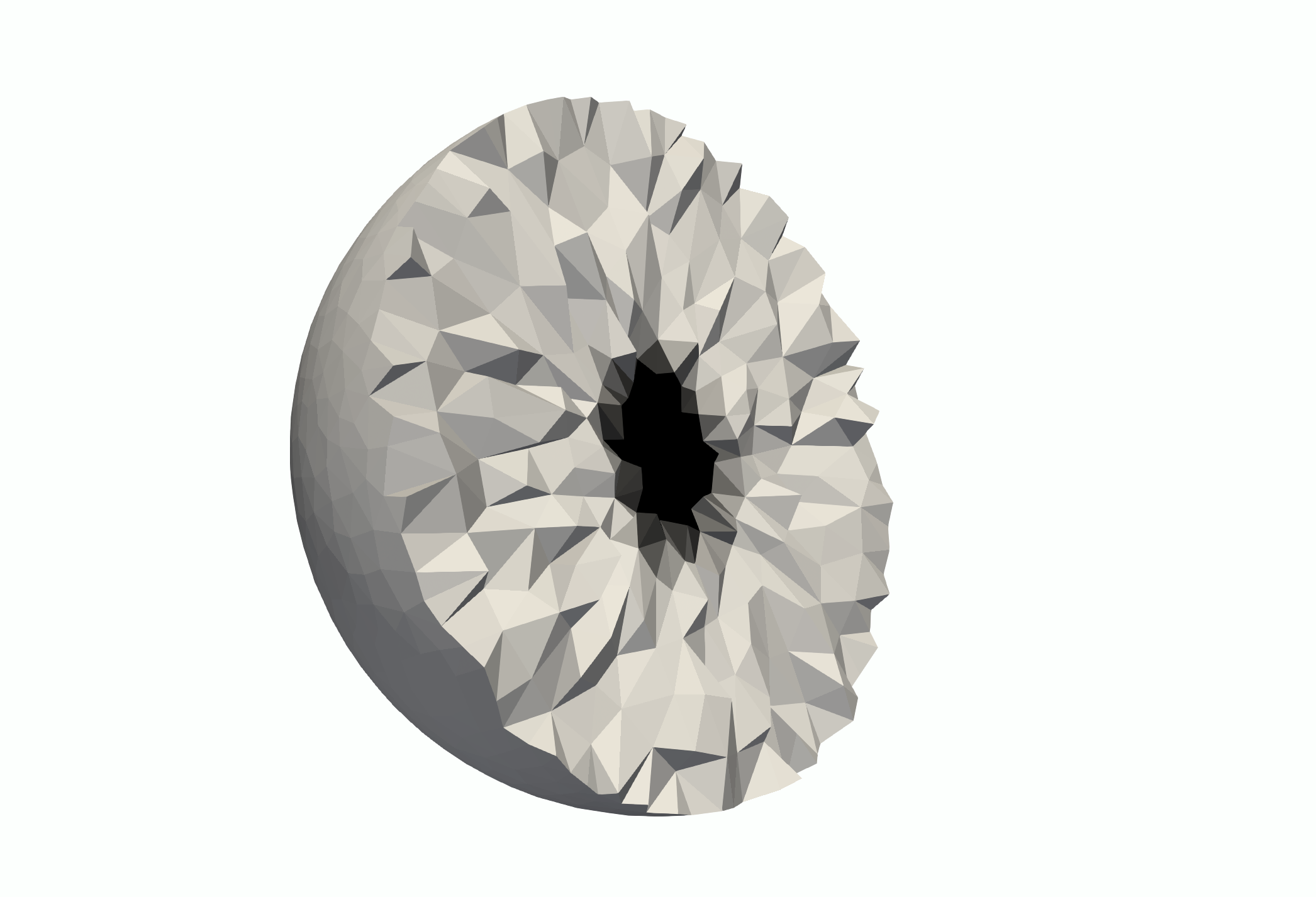}
    \caption{Toy example: 3D. Evolution at times $t=0.0$, $t=0.3$, $t=0.7$ and $t=1.0$. The final image should be compared with the target in \cref{fig:toy.targets}. (Shapes are clipped to show interior data.)}
    \label{fig:toy.3d}
\end{figure}

\subsection{MERFISH image registration}
We illustrate the previous discussion with preliminary based on MERFISH images of mouse brains
\cite{Hongkui-Zeng-2022}. Two-dimensional MERFISH datasets were discretized on grids with spatial resolution $\lambda = \SI{100}{\micro\meter}$. Out of the 700 genes provided for the image, a subset of 10 genes with largest standard deviation was selected to build the image varifolds. We used a Gaussian kernel for $K_1$ and the kernel provided in \cref{eq:cauchy} for $K_2$ after switching to log scale. \Cref{fig:mim-merfish-1stmouse.result} provides images from two brain sections from the same mouse, the first one being used as template and the second as the target for registration.
The top row shows the template, the middle row the target, with the bottom row showing the deformed template sections aligned to the target images.
Figure \ref{fig:mim-merfish-2ndmouse.result}
shows similar results with much great deformations for the same template (top row) but
mapped to a second mouse section (middle row) with the resulting deformed template shown (bottom row).
The deformation grids are shown in \cref{fig:merfish.grid}. We see small deformation when registering the first two sections which come from the same brain and are quite similar, and much stronger changes for the alignment of the first and third sections, which come from different mice and have significant discrepancies.


\begin{figure}[htb]
    \centering
    \includegraphics[trim=4cm 4cm 4cm 4cm, clip, width =0.47\textwidth]{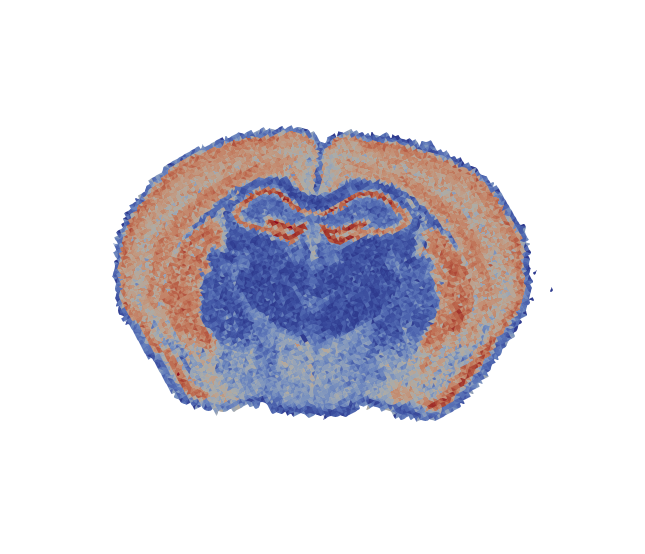}
      \includegraphics[trim=4cm 4cm 4cm 4cm, clip,width =0.47\textwidth]{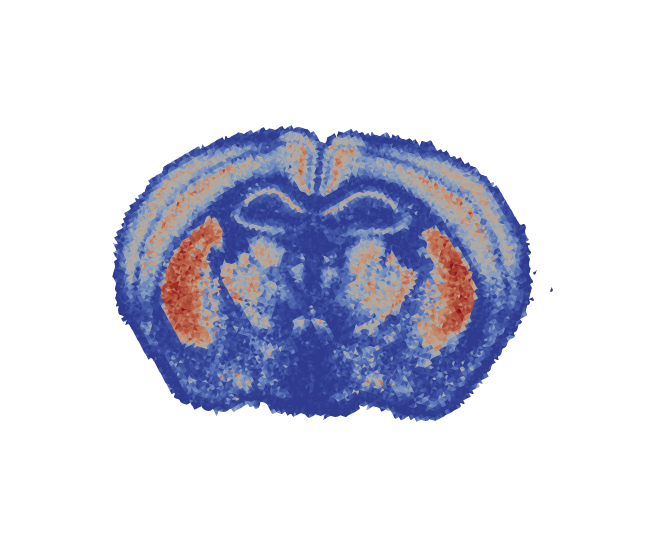}\\
     \includegraphics[trim=4cm 4cm 4cm 4cm, clip,width =0.47\textwidth]{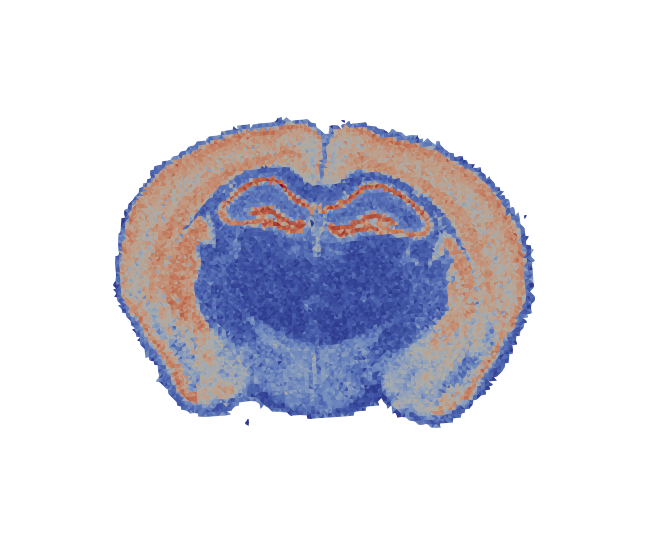}
      \includegraphics[trim=4cm 4cm 4cm 4cm, clip,width =0.47\textwidth]{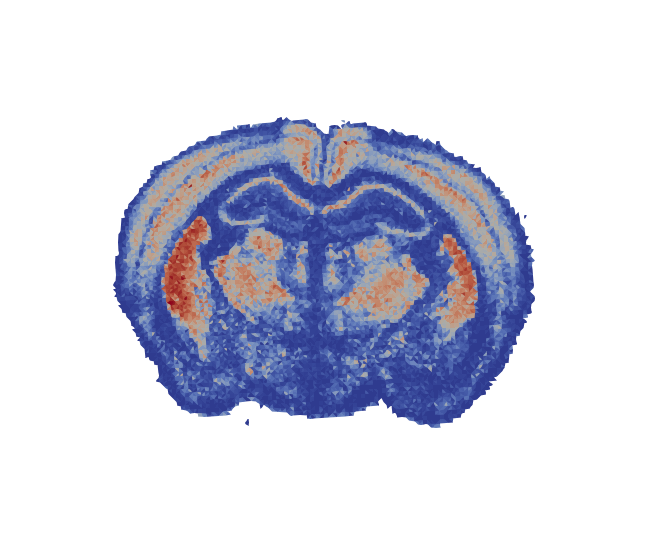}
     \\
      \includegraphics[trim=4cm 4cm 4cm 4cm, clip,width =0.47\textwidth]{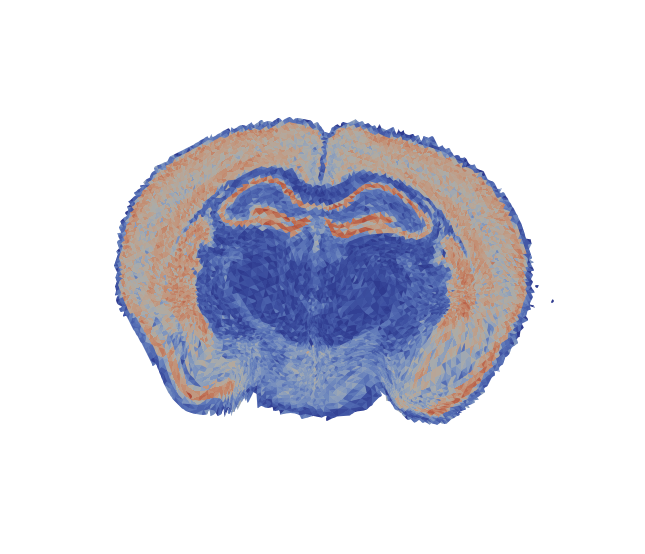}
      \includegraphics[trim=4cm 4cm 4cm 4cm, clip,width =0.47\textwidth]{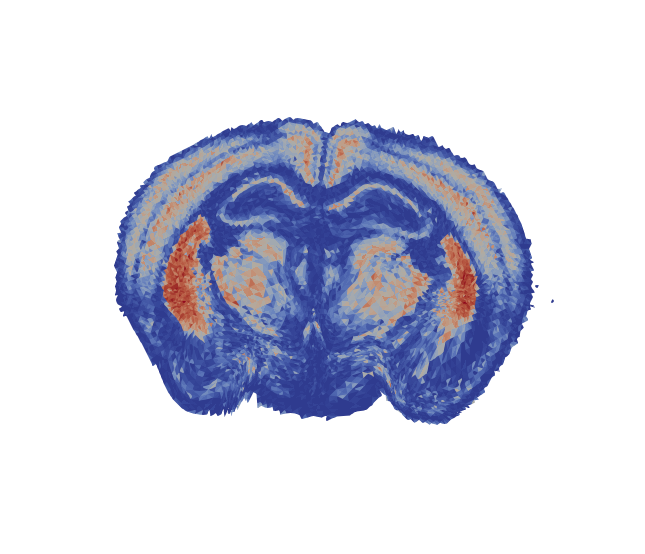}
    \caption{MERFish images: Top row shows template with two genes provided by Atp6ap1l and Satb2 gene counts (in log scale); middle row shows the target. Bottom row shows the template (top) mapped to the target (middle). Data from
\cite{Hongkui-Zeng-2022}.}
    \label{fig:mim-merfish-1stmouse.result}
\end{figure}

\begin{figure}[htb]
    \centering
    \includegraphics[trim=4cm 4cm 4cm 4cm, clip,width =0.427\textwidth]{Figures/merfish_template_im0.png}
  \includegraphics[trim=4cm 4cm 4cm 4cm, clip,width =0.427\textwidth]{Figures/merfish_template_im7.png}\\
  \includegraphics[trim=4cm 4cm 4cm 4cm, clip,width =0.427\textwidth]{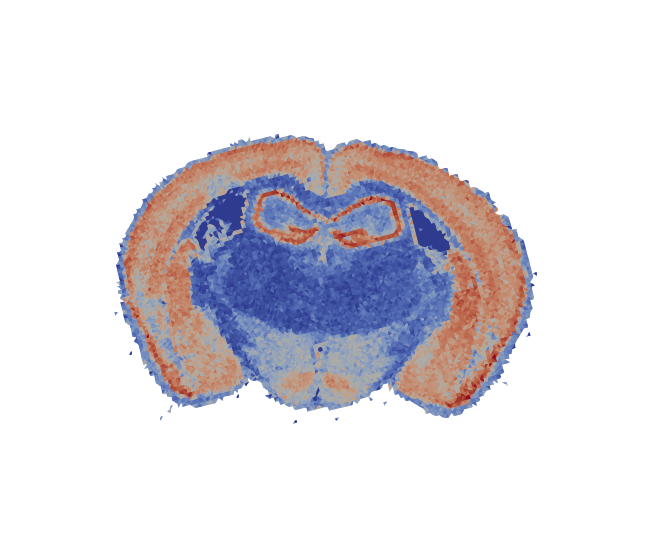}
    \includegraphics[trim=4cm 4cm 4cm 4cm, clip,width =0.427\textwidth]{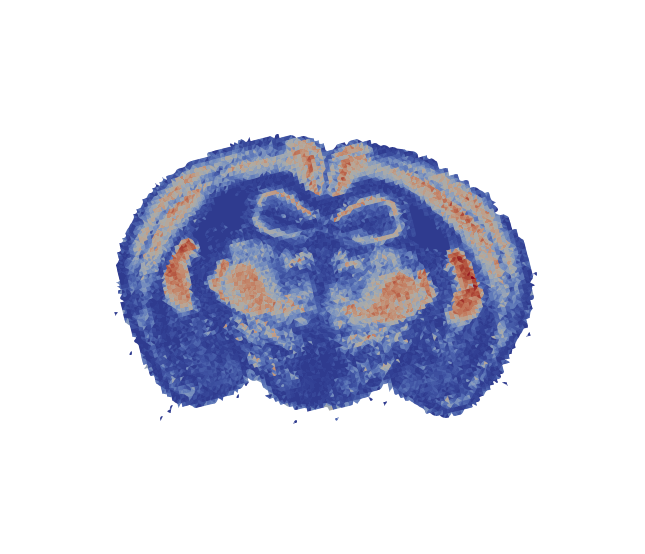}\\
    \includegraphics[trim=4cm 4cm 4cm 4cm, clip,width =0.427\textwidth]{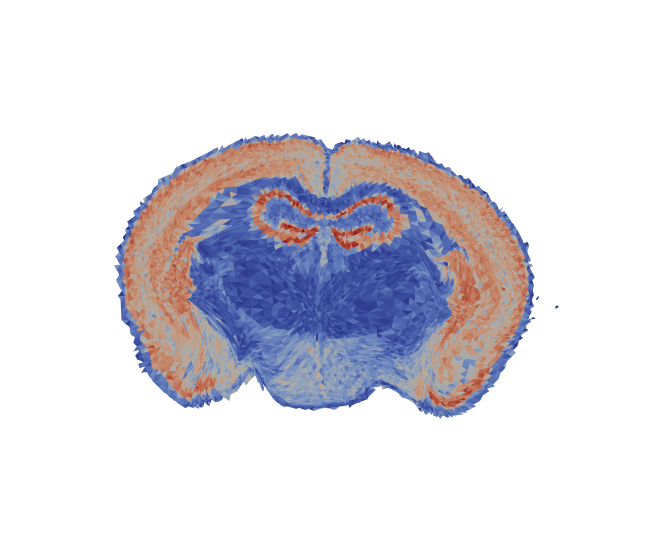}
    \includegraphics[trim=4cm 4cm 4cm 4cm, clip,width =0.427\textwidth]{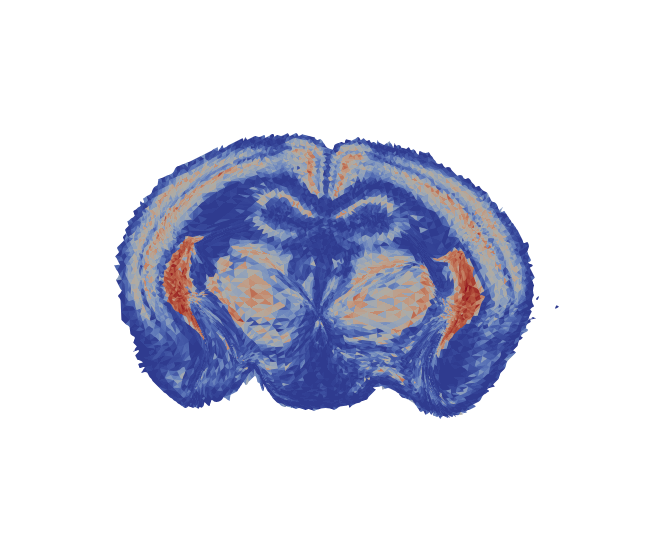}
\caption{
Top row shows template with two genes provided by Atp6ap1l and Satb2 gene counts (identical to top row Figure \ref{fig:mim-merfish-1stmouse.result});
middle row shows target of a second mouse with the two genes Atp6ap1l and Satb2 gene counts (in log scale); bottom row shows the template (top row previous figure \ref{fig:mim-merfish-1stmouse.result}) mapped based on a total of 10 genes of highest variance to the second mouse target.
Data taken from
\cite{Hongkui-Zeng-2022}.
}
    \label{fig:mim-merfish-2ndmouse.result}
\end{figure}


\begin{figure}
    \centering
    \includegraphics[trim=4cm 4cm 4cm 4cm, clip,width =0.47\textwidth]{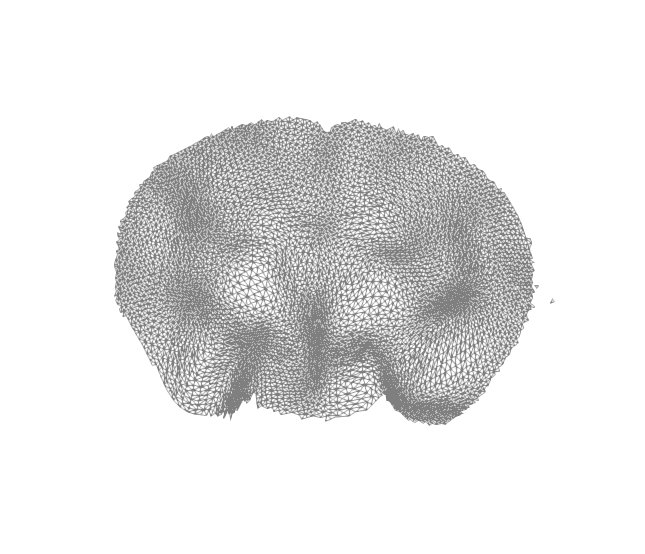}
    \includegraphics[trim=4cm 4cm 4cm 4cm, clip,width =0.47\textwidth]{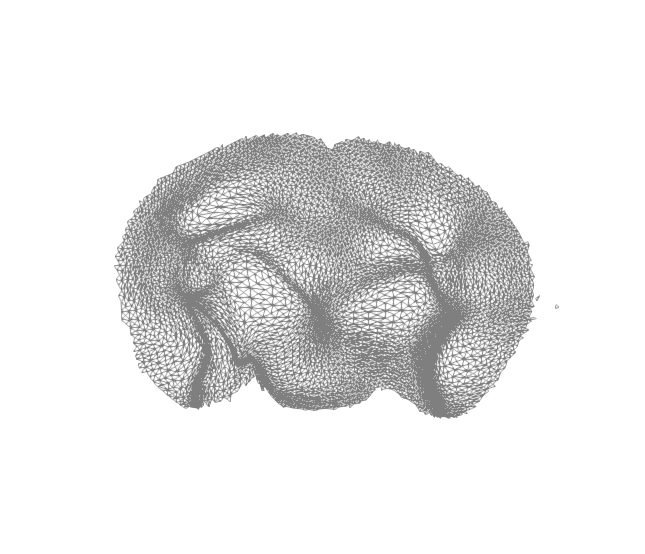}
    \caption{Deformation grids for the two mappings shown above, Figures \cref{fig:mim-merfish-1stmouse.result}, \cref{fig:mim-merfish-2ndmouse.result}. Data from
\cite{Hongkui-Zeng-2022}.
    }
    \label{fig:merfish.grid}
\end{figure}

\section{Atlasing: Crossing modality and Scale}
\subsection{Variational problems}
Transferring genomic, cellular and histological data to atlas coordinates is one of the mainstream examples of crossing modalities and crossing scale.
Atlases (\cref{fig:Allen-atls-mouse}) are by definition often ``cartoons'' \cite{mumford1989optimal} which make sense at the millimeter tissue scales but are used to interpret the finest molecular and particle scales. Similarly our work in human digital pathology brings histological micron scale of markers together with the atlas scales of Mai-Paxinos (\cref{fig:grids}).
We associate to the atlas the features space of cartoon labels $\mathcal L$.
We want to map the high-resolution gene features
with associated  probabilities $ \zeta^{\prime}$ on $\mathcal F$, the set of  micro-scale functional features  representing genomic expression or particle identity, to the tissue scale where we only have the cartoon labels. 

We assume that the varifolds are represented on meshes, as described in \cref{sec:meshes}; the micro-scale fine varifold has $ {\mathcal T}^\prime = ( S^\prime, \boldsymbol{x}^\prime, \boldsymbol{\alpha}^\prime, \boldsymbol{\zeta}^\prime)$,  $ S^\prime = ( I^\prime,  C^\prime)$, with 
\[
\mu_{{\mathcal T}^\prime} = \sum_{c'\in C^\prime}  \alpha_{c'}^\prime \vert \gamma_{c'}(
\boldsymbol{x}^\prime)\vert  \delta_{m_{c'}( \boldsymbol{x}^\prime)} \otimes \zeta_{c'}^\prime.
\]

At the coarse tissue scales, 
we assume that $\mathcal L$ is a small set of region labels forming the atlas features, $\vert {\mathcal L}\vert  \ll \vert {\mathcal F}\vert $. The atlas is  represented as a varifold defined on a mesh $ {\mathcal T} = ( S, \boldsymbol{x}, \boldsymbol{\alpha}, \boldsymbol{\zeta})$,  $ S = ( I,  C)$ with $\zeta_{c}, c \in C$ the atlas feature probability assigning probability to regions. The weights in $\boldsymbol \alpha = (\alpha_c, c\in C)$ are unknown, since they are generally not provided in atlases, but we will assume in the following that they are subject to box constraints in the form $\alpha_c^\mathrm{min} \leq \alpha_c \leq \alpha_c^\mathrm{max}$ for all $c\in C$, where $\alpha_c^\mathrm{min}$ and $\alpha_c^\mathrm{max}$ are known and correspond to prior expectation on the density of molecules or cells. Note that $\alpha_c^{\mathrm{min}} = 0$ and $\alpha_c^{\mathrm{max}} = \infty$ are allowed, and also $\alpha_c^{\mathrm{min}} = \alpha_c^{\mathrm{max}}$, yielding equality constraints, in case the densities are specified or estimated separately.
Through the measures $\zeta_c$, we allow for the specification of a probabilistic atlas, and ``cartoon representations'' (piecewise constant images) are such that simplexes in the same region all have the same probability $\zeta_c$ which is a Dirac.

Each category $\ell\in \mathcal L$ has a specific expression pattern in the tissue, that we represent by a probability distribution on $\F$. Since this  genomic measure feature is not generally available, we propose to estimate it from data, and introduce a family of
parametric measures $(\pi_{\vartheta}, \vartheta \in \Theta)$ on $\mathcal F$ (see examples below). To each label $\ell\in \mathcal L$ is associated a parameter $\theta_\ell\in \Theta$ that need to be estimated. The problem then becomes
to simultaneously estimate the diffeomorphism $\varphi$ of $\mathbb R^d$ mapping the atlas to the micro-scale varifold, the parametrization vector  $\theta  = (\theta_\ell, \ell\in \mathcal L)$, and the region weights.
Importantly, we do not assume that the $\pi_\theta$'s are probabilities measures, and we interpret $\pi_{\theta_\ell}(\mathcal{F})$ as a measure of the density of molecules or cells with label $\ell$.
Using this model, the imputed gene or cellular density at each site $c\in C$ becomes $\alpha_c = \sum_{\ell\in \mathcal L} \zeta_c(\ell)\pi_{\theta_\ell}(\mathcal{F})$  and the probability law at each site $c\in C$ is
$\sum_{\ell\in \mathcal L} \zeta_c(\ell)\pi_{\theta_\ell}/\alpha_c$, yielding a varifold representation of the atlas (with imputed gene or cellular features)
\[
\mu^{\theta}_{{\mathcal T}} = \sum_{c\in C} \vert \gamma_c(\boldsymbol{x})\vert   \delta_{m_{c}(\boldsymbol{x})} \otimes \Big(\sum_{\ell\in \mathcal L} \zeta_c(\ell)\pi_{\theta_\ell}\Big).
\]


The mapping problem is to map
$\mu^{\theta}_{\phi\cdot {\mathcal T}}$ close to $\mu_{{\mathcal T}^\prime}$, with $\varphi$ acting on meshes as defined in \cref{sec:meshes}.
This estimation is performed using alternating minimization, looping over the estimation of $\varphi$ with fixed $\theta$ and the estimation of $\theta$ with fixed $\varphi$ minimizing $\|\mu_{ \varphi(1)\cdot{\mathcal T}}^{\theta} - \mu_{\mathcal T^\prime}\|^2_{W^*}$
with our variational problem for crossing scales.
\begin{problem}
\label{varifold-norm-atlas-problem}
\begin{equation}
\label{eq:lddmm.cross.scales}
\begin{aligned}
 \inf_{\substack{ \theta_\ell, \ell \in \mathcal{L},v(\cdot) \in L^2([0,1],V) } }
&\int_0^1 \|v(t)\|_V^2 dt + \frac{1}{\sigma^2} \|\mu_{\varphi(1)\cdot {\mathcal T}}^{\theta} - \mu_{{\mathcal T}^\prime}\|^2_{W^*}
\\
\text{with} \qquad
&\partial_t \varphi(t) = v(t)\circ \varphi(t) \\
\text{and} \qquad &\alpha_c^{\mathrm{min}} \leq \sum_{\ell\in \mathcal L} \zeta_c(\ell)\pi_{\theta_\ell}(\mathcal{F}) \leq \alpha_c^{\mathrm{max}}, c\in C.
\end{aligned}
\end{equation}
Fixing $v(\cdot) \in L^2[0,1]$, with varifold norm kernel a product form \eqref{product-kernel}, $K=K_1 K_2$,
then we have:
\begin{gather}
\label{eq:varifold-norm-atlas-problem}
\|\mu_{{\varphi(1)\cdot \mathcal T}}^{\theta} - \mu_{{\mathcal T}^\prime}\|^2_{W^*} 
=\\
\begin{aligned}
&
\sum_{c_0,c_1\in C} \sum_{\ell_0, \ell_1\in \mathcal L} 
\vert \gamma_{c_0}\vert \, \gamma_{c_1}\vert  \zeta_{c_0}(\ell_0) \zeta_{c_1}(\ell_1) K_1(m_{c_0}, m_{c_1})
\int_{\mathcal F^2} K_2(f_0, f_1) d\pi_{\theta_{\ell_0}}(f_0)d\pi_{\theta_{\ell_1}}(f_1)\\
&- 2  \sum_{c\in C, c'\in  C^\prime} \sum_{\ell\in \mathcal L} 
\alpha_{c'}^\prime \vert \gamma_{c}\vert \, \vert  \gamma_{c'}^\prime\vert  \zeta_{c}(\ell) K_1(m_{c}, m_{c'}^\prime) \int_{\mathcal F^2} 
K_2(f, f') d\pi_{\theta_{\ell}}(f) d \zeta_{c'}^\prime(f') \ ,
\end{aligned}
\nonumber
\end{gather}
where we have denoted for short $\gamma_c = \gamma_c(\varphi(1)\cdot\boldsymbol{x})$, $m_c = m_c(\varphi(1)\cdot\boldsymbol{x})$, $ \gamma_{c'}^\prime = \gamma_{c'}(\boldsymbol{ x}^\prime)$, $ m_{c'}^\prime = m_{c'}(\boldsymbol{ x}^\prime)$.
\end{problem}

\subsection{Special cases}
\Cref{eq:varifold-norm-atlas-problem} must be minimized in $\theta$ subject to the density constraints in \cref{eq:lddmm.cross.scales}. This generally provides a nonlinear programming problem. However, in some special cases, including those listed below, this problem boils down to quadratic programming (QP).
\begin{enumerate}[label={\bf Example \arabic*.},wide]
    \item 
Take cell types as micro-scale features, so that $( \zeta^\prime_{c'}(f), f\in \mathcal F)$
are the probabilities on each cell type. Let $\Theta$ be the space of positive measures
on $\F$, taking $\pi_\theta (f)=\theta(f)$, $\theta(f) \geq 0$, $f\in\mathcal F$. 

Our problem is to estimate the density of each cell type  in every region $ 
\theta_\ell(f)$, $f\in \F$, $\ell \in \mathcal L$.
Choosing the identity kernel on $\mathcal F$, with $K_2(f,\tilde f) = 1$ if $f=\tilde f$ and 0 otherwise, the
minimization of \cref{eq:varifold-norm-atlas-problem} reduces to:
\begin{multline*}
\inf_{\substack{\theta_\ell, \ell \in \mathcal L\\
\theta_\ell(f) \geq 0, f\in \mathcal F
}}
\sum_{c_0, c_1\in C} \sum_{\ell_0, \ell_1\in \mathcal L}  
\vert \gamma_{c_0}\vert \, \vert \gamma_{c_1}\vert  \zeta_{c_0} (\ell_0) \zeta_{c_1}(\ell_1) K_1(m_{c_0}, m_{c_1})
\sum_{f\in \mathcal F}  {\theta_{\ell_0}}(f){\theta_{\ell_1}}(f)\\
- 2  \sum_{{c}\in C, c'\in C^{\prime}} \sum_{\ell_0\in \mathcal L} 
\alpha_{c'}^\prime \vert \gamma_{c}\vert \, \vert  \gamma_{c'}^\prime\vert  \zeta_{c}(\ell_0) K_1(m_{c}, m_{c'}^\prime)\sum_{f\in\mathcal F} {\theta_{\ell_0}}(f)  \zeta_{c'}^\prime(f),
\end{multline*}
with constraints
\[
\alpha_c^{\mathrm{min}} \leq \sum_{\ell\in \mathcal L}\sum_{f\in \mathcal F} \zeta_c(\ell)\theta_\ell(f) \leq \alpha_c^{\mathrm{max}}, c\in C.
\]

\item
Now consider mRNA counts on genes in $\mathcal G$.
Take a simple model where parameters are $\theta = (\theta(g), g\in \mathcal G) \in [0, +\infty)^{\mathcal G}$ with $\pi_\theta = w_\theta \delta_{\eta_\theta}$,  a weighted Dirac measure, taking $w_\theta = \sum_{g\in\mathcal G}\theta(g)$ and $\eta_\theta(g) = \theta(g)/w_\theta$.
So, to each label $\ell$ is assigned an expression vector $\theta_\ell$ with $w_{\theta_\ell}$ representing the total expression and $\eta_{\theta_\ell}$ a normalized expression. 
Taking
the kernel as Euclidean on $\mathcal F$ (see remark below), $K_2(f,\tilde f) = \sum_{g\in \mathcal G}f(g) \tilde f(g)$, we have 
\[
\int_{\mathcal F} K_2 (f,f') d\pi_{\theta_{\ell_0}}(f) d \zeta_{c'}^\prime (f) = \sum_{g\in \mathcal G} w_{\theta_{\ell_0}}\eta_{\theta_{\ell_0}}(g) \bar\theta_{c'}(g) = \sum_{g\in \mathcal G} \theta_{\ell_0}(g) \bar\theta_{c'}(g)
\]
where $\bar\theta_{c'}$ is the average expression $\int_{\mathcal F} f d \zeta_{c'}^\prime(f)$. This gives
the minimization of \cref{eq:varifold-norm-atlas-problem} reducing to:
\begin{multline*}
\inf_{\substack{\theta_\ell, \ell \in \mathcal L \\
\theta_\ell(g) \geq 0, g \in \mathcal G
}} 
\sum_{c_0,c_1\in C} \sum_{\ell_0, \ell_1\in \mathcal L}
\vert \gamma_{c_0}\vert \, \vert \gamma_{c_1}\vert  \zeta_{c_0}({\ell_0}) \zeta_{c_1}({\ell_1}) K_1(m_{c_0}, m_{c_1})
\sum_{g \in \mathcal G} \theta_{\ell_0}(g) \theta_{\ell_1}(g)
\\
- 2  \sum_{c\in C, c'\in  C^\prime} \sum_{\ell_0\in \mathcal L} 
\alpha_{c'}^\prime \vert \gamma_{c}\vert \, \vert \gamma_{c'}^\prime\vert  \zeta_{c}(\ell_0) K_1(m_{c}, m_{{c'}}^\prime)
\sum_{g \in \mathcal G} \theta_{\ell_0}(g) \bar \theta_{c'} (g)
\end{multline*}
with constraints
\[
\alpha_c^{\mathrm{min}} \leq \sum_{\ell\in \mathcal L} \sum_{g\in \mathcal G} \zeta_c(\ell)\theta_\ell(g) \leq \alpha_c^{\mathrm{max}}, c\in C.
\]

\end{enumerate}

Even though they were obtained from different models and contexts, the two examples above simplify to almost identical QP problems, respectively in $(\theta_\ell, \ell\in \mathcal L)$. These problems are rephrased explicitly as QP problems below.

\begin{my_algorithm}
For Example 1., let $K_2(f,\tilde f) = 1$ if $f=\tilde f$ and 0 otherwise, defining
\begin{equation}
\label{A-constant}
\begin{cases}
A_{\ell_0, \ell_1} = \sum_{c_0,c_1\in C}  \alpha_{c_0}\alpha_{c_1} \vert \gamma_{c_0}\vert \, \vert \gamma_{c_1}\vert  \zeta_{c_0}(\ell_0) \zeta_{c_1}(\ell_1) K_1(m_{c_0}, m_{c_1})
\\
b_{\ell_0}(f) = \sum_{c\in C, c'\in  C^\prime} \alpha_{c}\alpha_{c'}^\prime \vert \gamma_{c}\vert \, \vert  \gamma_{c'}^\prime\vert  \zeta_{c}(\ell_0) K_1(m_{c}, m_{{c'}}^\prime)\zeta_{c'}^\prime(f), 
\end{cases}
\end{equation}
then $
({\theta_\ell}, \ell\in \mathcal L)$ minimizes 
\begin{equation}
\begin{aligned}
    \label{eq:h.min.1}
    &
\Phi_1(\theta) = \sum_{\ell_0, \ell_1\in \mathcal F} \sum_{f\in \mathcal F} A_{\ell_0, \ell_1} {\theta_{\ell_0}}(f) {\theta_{\ell_1}}(f) - 2\sum_{\ell_0\in \mathcal L}\sum_{f\in\mathcal F} b_{\ell_0}(f) {\theta_{\ell_0}}(f)
\\
& \text{subject to the constraints} \ \
\alpha_c^{\mathrm{min}} \leq \sum_{\ell\in \mathcal L}\sum_{f\in \mathcal F} \zeta_c(\ell)\theta_\ell(f) \leq \alpha_c^{\mathrm{max}}, c\in C.
\end{aligned}
\end{equation}
 \end{my_algorithm}

\begin{my_algorithm}
For Example 2., let $K_2(f,\tilde f) = \sum_{g\in \mathcal G}f(g) \tilde f(g)$, 
$\bar\theta_{c'} = \int_{\mathcal F} f d \zeta_{c'}^\prime$,
taking $A$ as in
\cref{A-constant},  and define
\begin{equation}
b_{\ell_0}(g) = \sum_{c\in C, c'\in C^\prime} \alpha_{c}\alpha_{c'}^\prime \vert \gamma_{c}\vert \, \vert  \gamma_{c'}^\prime\vert  \zeta_{c}(\ell_0) K_1(m_{c}, m_{c'}^\prime)
\bar \theta_{c'} ,
\end{equation}
then $
({\theta_\ell}, \ell\in \mathcal L)$ minimizes 
\begin{equation} 
\label{eq:h.min.2}
\begin{aligned}
   &
    \Phi_2(\theta) = 
\sum_{\ell_0, \ell_1\in \mathcal L} \sum_{g\in \mathcal G} A_{\ell_0, \ell_1} \theta_{\ell_0}(g) \theta_{\ell_1}(g) - 2\sum_{\ell_0\in \mathcal L}\sum_{g\in\mathcal G} b_{\ell_0}( g) \theta_{\ell_0}(g)
\\
&
\text{subject to the constraints} \ \ 
\alpha_c^{\mathrm{min}} \leq \sum_{\ell\in \mathcal L}\sum_{g\in \mathcal G} \zeta_c(\ell)\theta_\ell(g) \leq \alpha_c^{\mathrm{max}}, c\in C.
\end{aligned}
\end{equation}
\end{my_algorithm}

\begin{remark}
The Euclidean kernel used for $K_2$ in Example 2 is degenerate, in the sense that it induces a finite-dimensional RKHS, which can be identified to $\mathbb R^{\mathcal G}$ with the standard Euclidean norm. In this representation, probability measures on $\mathcal F$ are identified with their expectations, so that the metric does not differentiate between a Dirac measure $\delta_f$ and a probability measure $\zeta$ with expectation $f$. This explains why, in this example, the measures $\zeta'_{c'}$ were replaced with their associated average expression.

If $K_1$ is a spatial kernel, the product RKHS associated with $K_1K_2$ is identical to the RKHS  of multivariate functions $x \mapsto F(x)\in \mathbb R^{\mathcal G}$, formed with the $|\mathcal G|$-fold tensor product of scalar RKHS's associated with $K_1$.  Image varifolds associated with a Euclidean image kernel are therefore identified to $|\mathcal G|$-dimensional vector measures. 
\end{remark}

  \begin{figure}
    \centering
    \includegraphics[width=0.8\textwidth]{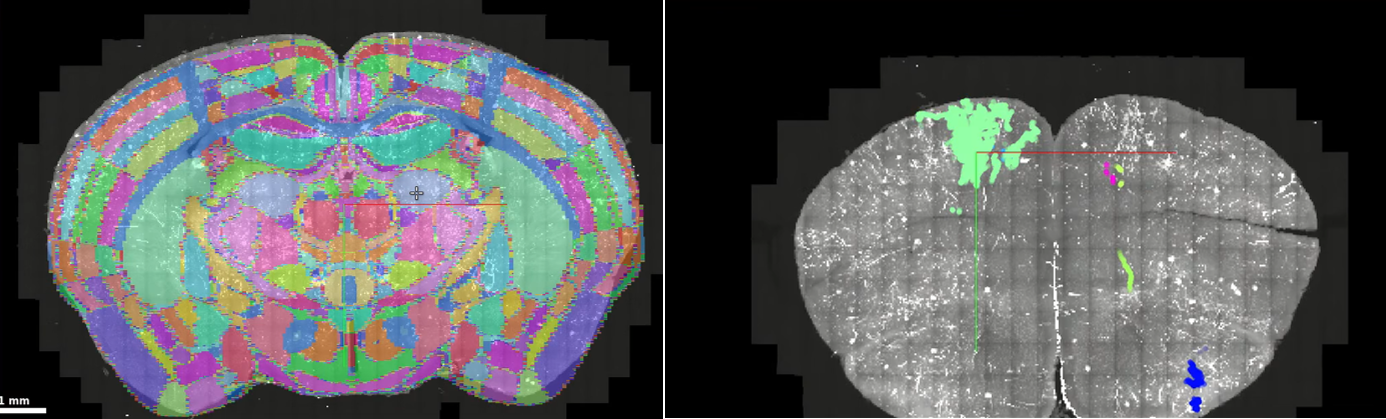}
    \\
    \caption{Depicting Allen Atlas scale (left) and mouse micro scales right. }
    \label{fig:Allen-atls-mouse}
\end{figure}
   \begin{figure}
    \centering
    \includegraphics[width=0.8\textwidth]{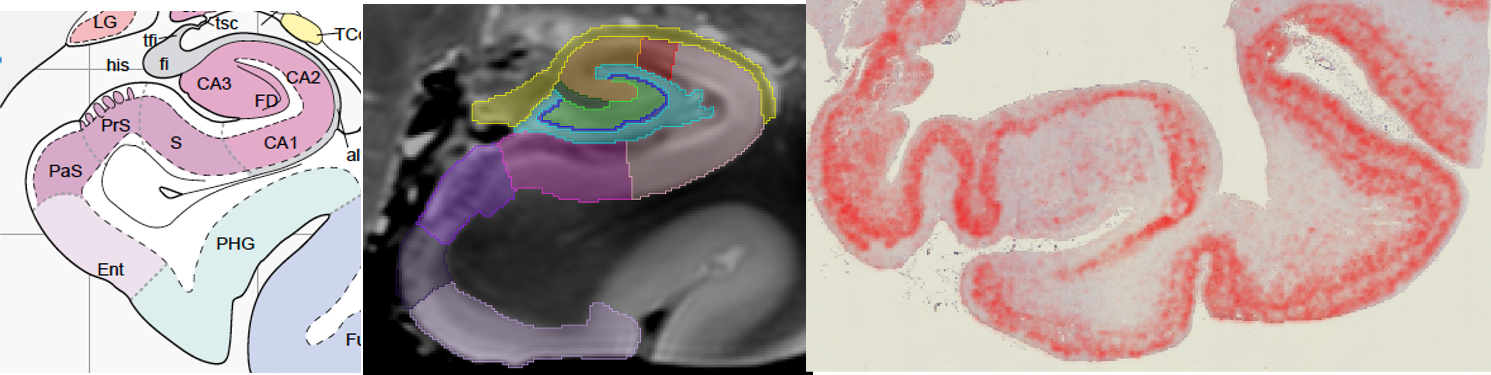}
    \\
    \caption{Mai-Paxinos atlas section and high field MRI with Tau molecular pathology (right). }
    \label{fig:grids}
\end{figure}

\section{Point processes and image varifold}
The realizations of a compound point process $\boldsymbol{N}$ are discrete image varifolds
\[
N = \sum_{k=1}^n \delta_{x_k} \otimes \delta_{f_k}.
\]
Their distributions are specified by a non-negative intensity function $\lambda$ defined on $\mathbb R^d$ and a transition probability $(x, f) \mapsto \zeta(x,f)$  defined on $\mathbb R^d \times \mathcal F$ (so that $\zeta(x,\cdot)$ is for all $x$ a probability on $\mathcal{F}$), such that
\begin{enumerate}[label=(\roman*)]
    \item for $\Omega\subset \mathbb R^d$ and $A \subset \mathcal F$,  $\boldsymbol N(\Omega\times A)$ follows a  a Poisson distribution with parameter 
    \[
    \Lambda^{\boldsymbol{N}}(\Omega\times A) = \int_\Omega \lambda(x) \zeta(x, A) dx.
    \]
    \item $\boldsymbol{N}(\Omega\times A)$ is independent of $\boldsymbol{N}(\Omega'\times A')$ if $(\Omega \times A) \,\cap\, (\Omega'\times A') = \emptyset$.
\end{enumerate}
In the following we will make the abuse of notation $N(\Omega) = N(\Omega\times \mathcal F)$ for the total number of points in $\Omega$ and similarly write $\Lambda^{\boldsymbol{N}}(\Omega) = \Lambda^{\boldsymbol{N}}(\Omega\times \mathcal F)$.
We assume that $\lambda$ is compactly supported, which allows us to represent realizations of $\boldsymbol N$ as finite sums.

If $F$ is a function on $\mathbb R^d \times \mathcal F$, its pairing with $N$ is 
\[
(N\vert F) = \sum_{k=1}^n F(x_k, f_k).
\]
The conditional expectation of $(\boldsymbol N\vert F)$ given the point set $n, x_1, \ldots, x_n$ is
\[
E((\boldsymbol{N} \vert  F)\vert n, x_1, \ldots, x_n) = \sum_{k=1}^n \int_{\mathcal F} F(x_k, f) d\zeta(x_k, f)
\]
which is the pairing of $F$ with the varifold $\sum_{k=1}^n \delta_{x_k} \otimes \zeta(x_k, \cdot)$. One also has
\[
E((\boldsymbol{N} \vert  F)) = \int_{\mathbb R^d} \int_{\mathcal F} F(x, f) d\zeta(x, f) \lambda(x)dx
\]
therefore associated with the varifold $\mu_{\boldsymbol N} = \lambda \otimes \zeta$. We will write $\boldsymbol{N} \sim \cpp(\lambda, \zeta)$ to indicate that $\boldsymbol{N}$ is a compound Poisson point process with intensity $\lambda$ and transition probability $\zeta$.

If $\boldsymbol N = \cpp(\lambda, \zeta)$ and $\varphi$ is a diffeomorphism, we define $\varphi\star \boldsymbol N := \cpp(\lambda\circ \varphi^{-1}, \zeta(\varphi^{-1}(\cdot), \cdot))$, so that 
\begin{align*}
(\mu_{\varphi\star \mathbf N}\vert F) &= \int_{\mathbb R^d} \int_{\mathcal F} F(x, f) d\zeta(\varphi^{-1}(x), f) \lambda(\varphi^{-1}(x))dx
\\
&= \int_{\mathbb R^d} \int_{\mathcal F} \vert d\varphi(x)\vert  F(\varphi(x), f) d\zeta(x, f) \lambda(x)dx \\
&= (\varphi\star \mu_{\mathbf N}\vert F).
\end{align*}

We now design a test statistic to assess whether the observations $N_1, N_2$ of two independent compound Poisson processes result from  $\boldsymbol{N}_1 = \varphi_1 \star \boldsymbol{N}$ and $\boldsymbol{N}_2 = \varphi_2\star \boldsymbol{N}$, where $\boldsymbol N = \cpp(\lambda, \zeta)$ and $\varphi_1$ and $\varphi_2$ are diffeomorphisms. ($\boldsymbol{N}$ is  therefore a ``template'' compound Poisson process.)   We here assume that $\varphi_1$ and $\varphi_2$ are known, and ignore the bias resulting from the fact that they have possibly been estimated using a registration procedure also involving $N_1$ and $N_2$. 

Fix subsets $\Omega \subset\mathbb R^d$ and $A\subset \mathcal F$, and assume that $\lambda$ and $\zeta$ take constant values, $\bar \lambda$ and $\bar \zeta$ on $\Omega$ (so that $\bar \lambda$ is a non-negative number and $\bar \zeta$ is a  measure on $\mathcal F$).
 One then has 
\[
\Lambda^{\boldsymbol{N}}(\Omega\times A) = \int_{\Omega} \lambda(x) \zeta(x, A) dx = \bar \lambda\, \bar \zeta(A)\, \vert \Omega\vert 
\]
and, under the assumptions above: 
\[
\Lambda^{\boldsymbol{N}_i}(\Omega_i\times A) = \Lambda^{\boldsymbol{N}}(\Omega\times A) \frac{\vert \Omega_i\vert }{\vert \Omega\vert } = \Lambda^{\boldsymbol{N}}(\Omega\times A)  \chi_{i},\quad i=1, 2,
\]
with $\Omega_i=\varphi_i(\Omega)$ and $\chi_{i} = \vert \Omega_i\vert \big/\vert \Omega\vert $. So, the null hypothesis is, for given $\Omega, A$: 
\[
H_0(\Omega, A): \frac{\Lambda^{\boldsymbol{N}_1}(\Omega_1\times A)}{\chi_{1}} = \frac{\Lambda^{\boldsymbol{N}_2}(\Omega_2\times A)}{\chi_{2}}, 
\]
and the  alternative hypothesis is
\[
H_1(\Omega, A): \frac{\Lambda^{\boldsymbol{N}_1}(\Omega_1\times A)}{\chi_{1}} \neq \frac{\Lambda^{\boldsymbol{N}_2}(\Omega_2\times A)}{\chi_{2}}.
\]
We will use the likelihood ratio test statistic to compare the two hypotheses.
Letting $\hat p = N_1(\Omega_1\times A)/(N_1(\Omega_1\times A)+N_2(\Omega_2\times A))$ and $p = \chi_{1}/(\chi_{1}+\chi_{2})$, it is given by
\begin{align}
&T(\Omega, A)%
= (N_1(\Omega_1\times A)+N_2(\Omega_2\times A)) \ D(\pi_{\hat p} \vert \vert  \pi_{p}) \ ,
\\ \ \ \ \ \ \ \ \ \ \ \ \ \ \ \  
& \ \ \ \ \ \ \ \ \ \ \ \ \ \  \nonumber \text{with} \ \ D(\pi_{\hat p} \vert \vert  \pi_{p}) =\left( \hat p \log \frac{\hat p}{p} + (1-\hat p) \log \frac{1-\hat p}{1-p}
\right) 
\end{align}

This test statistic can be computed over partitions $(\Omega_c, c\in C)$ of the support of $\lambda$ (small enough to justify the constancy assumption). If one takes $A = \mathcal F$, the collection  $T(\Omega_c, \mathcal F)$ focus on point density only. Using in addition a partition $(A_1, \ldots, A_m)$ of $\mathcal F$ (obtained, for example, as clusters interpreted as cell types), we obtain a complete family of statistics $T(\Omega_c, A_j)$ that provides a high-dimensional analysis of the differences between the observed realizations.


\section{Discussion}
The family of algorithms presented here provides the basis for future mapping technologies that allow for the representation of massive lists of molecular and cellular descriptions of the human body with the tissue scales of radiological and pathological imaging. 
Unifying the molecular and image scales represents an important step forward in brain mapping.
The central representation is the brain as a varifold measure defined on the direct product of space and function.

The algorithms described allow for the molecular computational anatomy mapping program to continue in the vein of D'Arcy Thompson, computing normed distances between brains.
A basic principle calculates similarity by acting diffeomorphisms which transforms one brain onto the other measuring the size of the transformation.
Central to the theory proposed here is the action which we describe as ``copy and paste,'' preserving the density of the quantized objects as space is transformed. This emphasizes the representation as containing two objects, the density $\rho$ on $\mathbb R^d$ and the field of conditional distributions $(\zeta_x, x \in \mathbb R^d)$ representing function over space.
The varifolds norms introduced for placing the varifold measures of brain space into a normed-space score both the density measure as well as function measure.

The varifold brainspace represents both space and function. 
Because the transformations defined act on space, the variation of the norm with respect to the group action becomes the variation of space through the varifold space kernel weighted by the direct inner product measuring alignment of the function measures.

Interestingly the varifold action we derive makes the molecular scale representation consistent with the tissue scale representation associated to MRI imaging and atlasing at 100 micron - 1 millimeter scale.
We explicitly define several features including RNA and cell-centered features.
In all the cases the features are represented as empirical probability laws over the RNA or cell identity feature spaces.

As part of the atlasing method
we examine several algorithms for transferring the high resolution gene features to the atlas tissue scales by inferring the gene features.
We demonstrate that this carries us into a family of quadratic programming problems in which the imputed feature laws are constrained
to be probability measures.

We also examine the family of optimal test statistics for the spatial transcriptomic setting and show that the Kullback-Lieber divergence plays a central role in characterizing discriminability. The KL-distance is calculated between the empirical feature laws $\zeta_x, x \in \mathbb R^d  $ under different hypotheses for the brain measures.

\appendix
\section{Proof of \cref{prop:1}}
We repeat the statement of the proposition for convenience.
\begin{proposition*}
Let $S^{(k)}=(I^{(k)},C^{(k)}), k=0,1$.
then the derivative of $U$ in \cref{eq:U} with respect to $x_j$ is  
\[
\partial_{x_j} U(\boldsymbol{x}) = \frac2{\sigma^2} \partial_{x_j} \langle \mu_{(S^{(0)},\boldsymbol x, \boldsymbol{\alpha}^{(0)}, \boldsymbol \zeta^{(0)})}, \mu_{(S^{(0)}, \boldsymbol{{\tilde x}}, \boldsymbol{\alpha}^{(0)}, \boldsymbol{\zeta}^{(0)})} - \mu_{\mathcal T^{(1)}} \rangle_{W^*}.
\]
evaluated with $\boldsymbol{\tilde x} = \boldsymbol x$, with
\begin{multline}
\label{partial-derivative-inner-product.1}
\partial_{x_j} \langle \mu_{(S,\boldsymbol x,\boldsymbol{\alpha}, \boldsymbol \zeta)}, \mu_{(S',\boldsymbol x', \boldsymbol{\alpha}', \boldsymbol  \zeta')}\rangle_{W^*} = \\
\sum_{c\in C: j\in c} \sum_{c'\in C'} \alpha_{c} \alpha'_{c'}\, \vert \gamma_{c'}(\boldsymbol x')\vert  \langle \zeta_{c}, \zeta'_{c'}\rangle_{W_2^*}  \bigg(\frac1{d+1}  \vert \gamma_{c}(\boldsymbol x)\vert \,\nabla_1 K_1(m_{c}(\boldsymbol x), m_{c'}(\boldsymbol x')) \\
+ \frac1{d!} K_1(m_{c}(\boldsymbol x), m_{c'}(\boldsymbol x')) n_{c}(x_j)\bigg)
\end{multline}
where $n_{c}(x_j)$ is the 3D normal to the face opposed to $x_j$ in $\gamma({c})$ of Eqn.
 \eqref{3d-normals}.


\end{proposition*}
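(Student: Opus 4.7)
The plan is to reduce the differentiation of $U$ from \cref{eq:U} to that of the bilinear pairing in \eqref{partial-derivative-inner-product.1}, and then to evaluate the latter by direct inspection of the explicit formula \cref{eq:mu.T} for $\mu_{\mathcal T}$. Writing $\mu(\boldsymbol x) := \mu_{(S^{(0)},\boldsymbol x,\boldsymbol\alpha^{(0)},\boldsymbol\zeta^{(0)})}$ and $\mu^\star := \mu_{\mathcal T^{(1)}}$, expanding the squared norm as a bilinear form and using the symmetry of $\langle\cdot,\cdot\rangle_{W^*}$ yield
\[
\partial_{x_j} U(\boldsymbol x) \;=\; \tfrac{2}{\sigma^2}\, \langle \partial_{x_j}\mu(\boldsymbol x),\, \mu(\boldsymbol x) - \mu^\star \rangle_{W^*},
\]
which is precisely the one-slot form stated in the proposition, once the auxiliary variable $\boldsymbol{\tilde x}$ is introduced and frozen to $\boldsymbol x$ after differentiation.

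It then suffices to prove \eqref{partial-derivative-inner-product.1}. Substituting \cref{eq:mu.T} into both slots and applying \cref{product-kernel} gives the expansion
\[
\langle \mu,\mu'\rangle_{W^*} \;=\; \sum_{c\in C}\sum_{c'\in C'} \alpha_c\,\alpha'_{c'}\,|\gamma_c(\boldsymbol x)|\,|\gamma_{c'}(\boldsymbol x')|\,K_1(m_c(\boldsymbol x),m_{c'}(\boldsymbol x'))\,\langle \zeta_c,\zeta'_{c'}\rangle_{W_2^*}.
\]
The dependence on $x_j$ is restricted to the summands with $j\in c$, and within each such summand to the two factors $|\gamma_c(\boldsymbol x)|$ and $K_1(m_c(\boldsymbol x),m_{c'}(\boldsymbol x'))$. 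The centroid formula $m_c(\boldsymbol x) = \frac{1}{d+1}(x_{c_0}+\cdots+x_{c_d})$ gives $\partial_{x_j} m_c = \tfrac{1}{d+1}\,\mathrm{Id}$ whenever $j\in c$, which produces the $\tfrac{1}{d+1}\nabla_1 K_1$ contribution in \eqref{partial-derivative-inner-product.1} by the chain rule.

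The crux of the argument is the identity
\[
\partial_{x_j}|\gamma_c(\boldsymbol x)| \;=\; \tfrac{1}{d!}\, n_c(x_j),
\]
asserting that the gradient of the simplex volume at a vertex is $1/d!$ times the inward normal to the opposite face. For $j = c_k$ with $k\geq 1$, this follows directly from \cref{eq:volume}: only the $k$-th column $x_{c_k}-x_{c_0}$ of the defining determinant depends on $x_{c_k}$, and multilinearity replaces that column by a test vector $u$; moving $u$ to the first column introduces the sign $(-1)^{k-1}$, which cancels the corresponding sign in \cref{dd-normals.1} and delivers $\partial_{x_{c_k}}|\gamma_c|\cdot u = \tfrac{1}{d!}\,n_{c,k}^{T}u$. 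The remaining case $j = c_0$ is handled by translation invariance: since $|\gamma_c(\boldsymbol x + h)| = |\gamma_c(\boldsymbol x)|$ for any constant shift $h$, one has $\sum_{k=0}^d \partial_{x_{c_k}}|\gamma_c(\boldsymbol x)| = 0$, and combining this with the values already computed for $k\geq 1$ and the identity \cref{dd=normals.3} produces $\partial_{x_{c_0}}|\gamma_c| = \tfrac{1}{d!}n_{c,0}$, simultaneously verifying consistency with the special defining relation \cref{dd-normals.2}.

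The main bookkeeping hurdle lies in the $k\geq 1$ case, where the sign convention of \cref{dd-normals.1} has to be reconciled with the sign introduced by the column permutation inside the determinant; the $k=0$ case, which would otherwise require differentiating through all $d$ columns simultaneously, is sidestepped elegantly by the translation-invariance shortcut together with \cref{dd=normals.3}. Once $\partial_{x_j}m_c$ and $\partial_{x_j}|\gamma_c|$ are in hand, applying the product rule to each $(c,c')$-summand and summing over $\{c\in C: j\in c\}\times C'$ assembles \eqref{partial-derivative-inner-product.1} term by term.
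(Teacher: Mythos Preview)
Your proof is correct and follows essentially the same approach as the paper: expand the inner product via \cref{eq:mu.T} and \cref{product-kernel}, differentiate the centroid and the volume factors separately, and identify the volume gradient with the opposite-face normals using \cref{dd-normals.1} and \cref{dd=normals.3}. The only cosmetic difference is that the paper works with a full perturbation $\boldsymbol h$ and extracts the $h_{c_0}$ coefficient algebraically from $\sum_{j\geq 1}(h_{c_j}-h_{c_0})^T n_{c,j}$ via \cref{dd=normals.3}, whereas you reach the same conclusion for the $c_0$ vertex through the translation-invariance shortcut; both routes are equivalent and rely on the same identity.
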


\begin{proof}
We compute the variation of  $\langle \mu_{(S, \boldsymbol x,\boldsymbol{\alpha}, \boldsymbol \zeta)}, \mu_{(S',\boldsymbol x', \boldsymbol{\alpha}', \boldsymbol \zeta')}\rangle_{W^*}$ with respect to a perturbation $\boldsymbol h = (h_i, i\in I)$ on the vertices $\boldsymbol x$ at $\epsilon=0$ of the variation:%
\begin{align*}
&\partial_\epsilon
\langle \mu_{(S, \boldsymbol x+\epsilon\boldsymbol h,\boldsymbol \alpha,\boldsymbol \zeta)}, \mu_{(S',\boldsymbol x', \boldsymbol\alpha',\boldsymbol \zeta')}\rangle_{W^*}\vert _{\epsilon=0}\\
&\ \ \ \ \ = \partial_\epsilon\Big(
\sum_{c\in C,c'\in C'} \alpha_{c} \alpha'_{c'} \vert \gamma_{c}(\boldsymbol x + \epsilon\boldsymbol h)\vert  \, \vert \gamma_{c'}(\boldsymbol x')\vert \, \langle \zeta_{c}, \zeta'_{c'}\rangle_{W_2^*}K_1(m_{c}(\boldsymbol x+\epsilon\boldsymbol h), m_{c'}(\boldsymbol x'))\Big)
_{\big\vert _{\epsilon=0}} \ .
\end{align*}
Differentiating gives two terms
given by the derivative of the kernel and the derivative of the volume term (defined in \cref{eq:volume}).
The derivative of the kernel is
\[
\partial_\epsilon K_1(m_{c}(\boldsymbol x+\epsilon\boldsymbol h), m_{c'}(\boldsymbol x'))\vert _{\epsilon=0}=
\frac1{d+1} \sum_{j=0}^d \nabla_1 K_1(m_{c}(\boldsymbol x), m_{c'}(\boldsymbol x'))^T h_{{c_{j}}} . 
\]
The derivative of the determinant in \cref{eq:volume} gives: 
\begin{align*}
& \partial_\epsilon \vert \gamma_{c}(\boldsymbol x + \epsilon\boldsymbol h)\vert 
\\ &= 
\frac1{d!}\sum_{j=1}^d \mathrm{det}\bigg(x_{c_1} - x_{c_0}, \ldots, x_{c_{j-1}} - x_{c_0}, h_{c_j}-h_{c_0}, x_{c_{j+1}} - x_{c_0}, x_{c_d} - x_{c_0}\bigg)\\
&= \frac1{d!}\sum_{j=1}^d (-1)^{j-1} \mathrm{det}\bigg(h_{c_j}-h_{c_0}, x_{c_1} - x_{c_0}, \ldots, x_{c_{j-1}} - x_{c_0},  x_{c_{j+1}} - x_{c_0}, x_{c_d} - x_{c_0}\bigg)\\
&= \frac1{d!}\sum_{j=1}^d (h_{c_j}-h_{c_0})^T n_{c,j}\\
&= \frac1{d!}\sum_{j=0}^d h_{c_j}^T n_{c,j}
\end{align*}
where the last two equations use \cref{dd-normals.1} and \cref{dd=normals.3}, respectively.

Collecting terms involving $h_j$ gives the variation:
\begin{align}
\label{eq:epg}
&&\frac1{d+1} \sum_{c\in C,c'\in C'} \alpha_{c} \alpha'_{c'} \vert \gamma_{c}(\boldsymbol x)\vert \, \vert \gamma_{c'}(\boldsymbol x')\vert  \langle \zeta_{c}, \zeta'_{c'}\rangle_{W_2^*}\,\nabla_1 K_1(m_{c}(\boldsymbol x), m_{c'}(\boldsymbol x'))^T \Big(\sum_{j=0}^{d} h_{c_{j}}\Big) \\
\nonumber
&& \ \ \ \ \ \ \ \ \ + \frac1{d!}
\sum_{{c\in C},{c'}\in C'} \alpha_{c} \alpha'_{c'} \vert \gamma_{c'}(\boldsymbol x')\vert  \langle \zeta_{c}, \zeta'_{c'}\rangle_{W_2^*} K_1(m_{c}(\boldsymbol x), m_{c'}(\boldsymbol x')) \Big(\sum_{j=0}^d n_{c_j}^Th_{c_{j}}\Big)\ .
\end{align}
Removing dependence on the perturbation direction gives the partial derivative 
\begin{multline*}
\partial_{x_j} \langle \mu_{(S,\boldsymbol x,\boldsymbol{\alpha}, \boldsymbol \zeta)}, \mu_{(S',\boldsymbol x',\boldsymbol{\alpha}', \boldsymbol \zeta')}\rangle_{W^*} = \\
\sum_{{c}\in C: j\in {c}} \sum_{{c'}\in C'}\alpha_{c} \alpha'_{c'} \vert \gamma_{c'}(\boldsymbol x')\vert  \langle \zeta_{c}, \zeta'_{c'}\rangle_{W_2^*}  \Big(\frac1{d+1}  \vert \gamma_{c}(\boldsymbol x)\vert \,\nabla_1 K_1(m_{c}(\boldsymbol x), m_{c'}(\boldsymbol x')) \\+ \frac1{d!} K_1(m_{c}(\boldsymbol x), m_{c'}(\boldsymbol x')) n_{c}(x_j)\Big)
\end{multline*}
where $n_{c}(x_j)$ is the inward weighted normal to the face opposed to $x_j$ in $\gamma({c})$. We finally get the expression of the gradient of the data attachment term as
\[
\partial_{x_j} \|\mu_{(S^{(0)},\boldsymbol x,\boldsymbol \zeta^{(0)})} - \mu_{\mathcal T^{(1)}}\|^2_{W^*} = 2 \partial_{x_j} \langle \mu_{(S^{(0)},\boldsymbol x,\boldsymbol \zeta^{(0)})}, \mu_{(S^{(0)}, \boldsymbol{{\tilde x}}, \boldsymbol{\zeta})} - \mu_{\mathcal T^{(1)}} \rangle_{W^*}.
\]
evaluated with $\boldsymbol{\tilde x} = \boldsymbol x$.
\end{proof}

\bmhead{Conflict of Interest}
MM owns a founder share of Anatomy Works with the
arrangement being managed by Johns Hopkins University
in accordance with its conflict of interest policies. The
remaining authors declare that the research was conducted
in the absence of any commercial or financial relationships
that could be construed as a potential conflict of interest.

\bmhead{Acknowledgements}
Authors would like to acknowledge the Allen Institute for their support via the data contribution.

This work was supported by the National Institutes of
Health (NIH) 
grants R01EB020062
(MM), R01NS102670 (MM), U19AG033655 (MM), P41-
EB031771 (MM), and R01MH105660 (MM);
the National Science Foundation (NSF) 
16-569 NeuroNex contract 1707298 (MM); and the
Computational Anatomy Science Gateway (MM) as part of
the Extreme Science and Engineering Discovery Environment
(XSEDE Towns et al., 2014), which is supported by
the NSF grant ACI1548562,  and the Kavli
Neuroscience Discovery Institute 
supported
by the Kavli Foundation 
(MM).
\bibliography{refs}

\end{document}